\pdfoutput=1
\RequirePackage{ifpdf}
\ifpdf 
\documentclass[pdftex]{sigma}
\else
\documentclass{sigma}
\fi

\numberwithin{equation}{section}

\newtheorem{Theorem}{Theorem}[section]
\newtheorem{Corollary}[Theorem]{Corollary}
\newtheorem{Lemma}[Theorem]{Lemma}

 { \theoremstyle{definition}
\newtheorem{Definition}[Theorem]{Definition}

\newtheorem{Remark}[Theorem]{Remark} }

\begin{document}
\allowdisplaybreaks

\newcommand{\arXivNumber}{2006.04092}

\renewcommand{\thefootnote}{}

\renewcommand{\PaperNumber}{114}

\FirstPageHeading

\ShortArticleName{The Measure Preserving Isometry Groups of Metric Measure Spaces}

\ArticleName{The Measure Preserving Isometry Groups\\ of Metric Measure Spaces\footnote{This paper is a~contribution to the Special Issue on Scalar and Ricci Curvature in honor of Misha Gromov on his 75th Birthday. The full collection is available at \href{https://www.emis.de/journals/SIGMA/Gromov.html}{https://www.emis.de/journals/SIGMA/Gromov.html}}}

\Author{Yifan GUO~$^{\dag\ddag}$}

\AuthorNameForHeading{Y.~Guo}

\Address{$^\dag$~Beijing Institute of Mathematical Sciences and Applications, Beijing, P.R.~China}
\EmailD{\href{mailto:yifan_guo@foxmail.com}{yifan\_guo@foxmail.com}}
\Address{$^\ddag$~Department of Mathematics, University of California, Irvine, CA, USA}

\ArticleDates{Received June 30, 2020, in final form November 02, 2020; Published online November 10, 2020}

\Abstract{Bochner's theorem says that if $M$ is a compact Riemannian manifold with negative Ricci curvature, then the isometry group $\operatorname{Iso}(M)$ is finite. In this article, we show that if $(X,d,m)$ is a compact metric measure space with synthetic negative Ricci curvature in Sturm's sense, then the measure preserving isometry group $\operatorname{Iso}(X,d,m)$ is finite. We also give an effective estimate on the order of the measure preserving isometry group for a~compact weighted Riemannian manifold with negative Bakry--\'Emery Ricci curvature except for small portions.}

\Keywords{optimal transport; synthetic Ricci curvature; metric measure space; Bochner's theorem; measure preserving isometry}

\Classification{53C20; 53C21; 53C23}

\renewcommand{\thefootnote}{\arabic{footnote}}
\setcounter{footnote}{0}

\section{Introduction}

Throughout this article, a metric measure space $(X,d,m)$ means that $(X,d)$ is a complete separable metric space, and $m$ is a Borel $\sigma$-finite measure on $X$, which is also finite on bounded sets.
We denote by $\operatorname{Iso}(X,d,m)$ the group of isometries of $(X,d)$ which preserve the measure~$m$. We also denote by $\#\operatorname{Iso}(X,d,m)$ the number of elements in $\operatorname{Iso}(X,d,m)$.

An example of metric measure space is a weighted Riemannian manifold $(M,g,m)$, which means that $(M,g)$ is an $n$-dimensional Riemannian manifold, and $m={\rm e}^{-v}\mathrm {vol}_g$ where $\mathrm {vol}_g$ is the volume element associated with $g$ and $v\colon M\to\mathbb R$ is a $C^3$ function. In this case, $X=M$, $d=d_g$ is the intrinsic metric induced by $g$ and $m={\rm e}^{-v}\mathrm {vol}_g$.
For $N\in [n,\infty)$ we define the {$N$-Bakry--\'Emery Ricci tensor} on $(M,g,m)$ by
\[{\operatorname{Ric}}_{N,m}:={\operatorname{Ric}}+\nabla^2v-\frac{ dv\otimes dv}{N-n}\]
with the convention that when $N=n$, we require $v=0$ so that $\operatorname{Ric}_{n,m}=\operatorname{Ric}$.
In the case of $N=\infty$, we define
\[\operatorname{Ric}_{\infty,m}:={\operatorname{Ric}}+\nabla^2v.\]
Also, we define the Witten Laplacian
\[
\Delta_{m}u:=\Delta_g u-\langle\nabla v,\nabla u\rangle_g.
\]
\subsection{Synthetic Ricci curvature bounds}
The study of synthetic Ricci curvature lower bounds for metric measure spaces originates from the works of Bakry--\'Emery \cite{BE}, Lott--Villani \cite{LV} and Sturm \cite{St1,St2}. These are the so-called $\mathsf{CD}(K,N)$ spaces ($K\in\mathbb{R}$, $1\le N\le \infty$), spaces satisfying $(K,N)$-curvature-dimension condition. To rule out non-Riemannian Finsler manifolds, Ambrosio--Gigli--Savar\'e \cite{AGS1} and Gigli \cite{Gi} add an infinitesimal Hilbertianity condition to $\mathsf{CD}(K,N)$ by introducing the class of $\mathsf{RCD}(K,N)$ spaces, spaces satisfying $(K,N)$-Riemannian-curvature-dimension condition. Since then, many geometric and analytic results of Ricci curvature lower bounds have been established on $\mathsf{CD}(K,N)$ and $\mathsf{RCD}(K,N)$ spaces. For example, Bishop--Gromov comparison theorem \cite{St2}, Cheeger--Gromoll splitting theorem \cite{Gi1}, Li-Yau estimates for heat flow \cite{BBG,GM,ZhZh}, sharp Sobolev inequality \cite{Pr}, and Levy--Gromov isoperimetric inequality \cite{CM}, etc.

It is natural to ask whether there is a synthetic notion of Ricci curvature upper bound. This question for $\operatorname{Ric}_{\infty,m}$ is addressed by Sturm in his paper \cite{St}. Sturm's definition is inspired by the fact that for any infinitesimally Hilbertian space $(X,d,m)$ the condition $\mathsf{RCD}(K,\infty)$ is equivalent to the inequality
\begin{align}\label{contract}W(\mathcal{H}_t\delta_x,\mathcal{H}_t\delta_y)\leq {\rm e}^{-Kt}d(x,y)
\end{align}
for all $x,y\in X$ and $t>0$ (see, e.g., \cite{AGS1}). Here, $W$ is the 2-Wasserstein distance on $P_2(X)$ and $\mathcal{H}_t$ is the heat flow or the gradient flow of the entropy functional $\operatorname{Ent}_m$, while $\delta_x$ is the Dirac measure at $x$ (see Section~\ref{ricupp} for definitions). The basic idea is to replace ``$\leq$'' by ``$\geq$'' in (\ref{contract}).
Solving $K$ from $W(\mathcal{H}_t\delta_x,\mathcal{H}_t\delta_y)\geq {\rm e}^{-Kt}d(x,y)$, we define the quantities
\begin{gather*}\theta^{+}(x,y)=-\liminf_{t\to 0+}\frac 1t\log(W(\mathcal{H}_t\delta_x,\mathcal{H}_t\delta_y)/d(x,y)),\\
 \theta^*(x)=\limsup_{y,z\to x}\theta^+(y,z).\end{gather*}
If $(M,g,m)$ is a weighted Riemannian manifold, Sturm~\cite{St} proved that
\[ \theta^* (x)=\sup\big\{\operatorname{Ric}_{\infty,m}( \xi, \xi)/|\xi|^2\colon \xi\in T_xM,\,\xi\neq0\big\}\] for all $x\in M$ (see Section~\ref{ricupp} for more details). This motivates the following definition.
\begin{Definition}[Sturm \cite{St}]\label{ricupdef}
We say that the metric measure space $(X,d,m)$ has {synthetic Ricci curvature upper bound $K$} if $\theta^* (x)\leq K$ for all $x\in X$.
\end{Definition}

However, it should be noted that due to the theorems of Gao--Yau \cite{GY} and Lohkamp \cite{Lo}, Ricci curvature upper bounds do not place any topological restriction on the manifold. More precisely, for any integer $n\geq 3$, there exist two positive constants $\Lambda_1=\Lambda_1(n)$, $\Lambda_2=\Lambda_2(n)$ such that each manifold $M$ of dimension $n$ admits a complete metric $g$ with $-\Lambda_1<\operatorname{Ric}(g)<-\Lambda_2$.

Neither is the set of Riemannian manifolds with Ricci curvature upper bound precompact in Gromov--Hausdorff convergence, since the number of small balls in a large ball goes to $\infty$ as $\operatorname{Ric}\to-\infty$. Given such flexibility of Ricci curvature upper bound, it is not surprising to learn that there are not as many geometric results of Ricci curvature upper bounds as Ricci curvature lower bounds. In fact, it was pointed out by Gromov in \cite[Section~5]{Gr} that the only result widely known is Bochner's theorem, which is the main subject of this article.

\subsection{Bochner's theorem}
One of the classical results of a compact manifold with negative Ricci curvature (Ricci curvature bounded above by~0) is Bochner's theorem which states that the isometry group of the manifold must be finite (see, e.g.,~\cite{Bo}). In fact, Bochner showed that there exists no continuous group of isometry by considering the Laplacian of the square norm of the Killing vector field. This is the genesis of the famous ``Bochner technique'' which produces numerous results.

Later, several authors tried to extend Bochner's theorem by estimating the order of the isometry group by various quantities. Before we mention them, we may see from the work of Lohkamp that it is impossible to control the order of the isometry group merely in terms of dimension and Ricci curvature bounds.
\begin{Theorem}[Lohkamp \cite{Lo}]
Let $M$ be a compact $n$-dimensional manifold with $n\geq 3$ and $G$ be a subgroup of $\operatorname{Diff}(M)$, the group of diffeomorphisms of~$M$. Then $G$ is the isometry group of~$M$ for some metric $g$ with $\operatorname{Ric}(g)<0$ if and only if $G$ is finite.
\end{Theorem}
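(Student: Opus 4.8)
The \emph{only if} direction is an instance of Bochner's theorem as recalled above: if $\operatorname{Ric}(g)<0$ on the compact manifold $M$, then $\operatorname{Iso}(M,g)$ is finite, and since it is by definition a subgroup of $\operatorname{Diff}(M)$, every group $G$ arising this way is finite. So the real content is the \emph{if} direction, and here is how I would proceed. Starting from an arbitrary Riemannian metric on $M$ and averaging it over the finite group $G$, I would obtain a $G$-invariant metric $g_0$, so that $G\subseteq\operatorname{Iso}(M,g_0)$. The real work is then to deform $g_0$ to a metric of strictly negative Ricci curvature without destroying the $G$-action, and afterwards to make sure the isometry group is \emph{exactly} $G$.

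For the negative-Ricci deformation I would run Lohkamp's construction equivariantly. His deformation is assembled from local modifications — warpings supported in small balls that bend the metric so as to push all Ricci eigenvalues negative, which is possible precisely because $n\geq 3$ — and these local modifications can be organized $G$-orbit by $G$-orbit. Away from the fixed-point sets of the nontrivial elements of $G$ this is routine: perform a modification in one ball of a $G$-orbit and copy it around by the $g_0$-isometries in $G$. Near the fixed-point loci, which are lower-dimensional submanifolds, one works inside $G$-invariant tubular neighborhoods and, via the slice theorem, chooses the local warping models so that they are themselves invariant under the relevant isotropy representations. Since $\operatorname{Ric}<0$ is an open condition in the $C^2$ topology on a compact manifold, the finitely many equivariant patches glue to a $G$-invariant metric $g_1$ with $\operatorname{Ric}(g_1)<0$.

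At this point Bochner's theorem gives that $H:=\operatorname{Iso}(M,g_1)$ is a finite group with $G\subseteq H$, and it remains to shave off $H\setminus G$. If $H\neq G$, pick $\sigma\in H\setminus G$ and a $G$-invariant symmetric $2$-tensor $h$ that is not fixed by $\sigma$; such an $h$ is produced by taking a small bump tensor supported near a point moved by $\sigma$ and averaging over $G$ only. For $\epsilon>0$ small, $g_1+\epsilon h$ is still $G$-invariant and still has $\operatorname{Ric}<0$ by $C^2$-openness, but $\sigma$ is no longer an isometry. One must check that the perturbation does not manufacture new symmetries: by an equivariant version of Ebin's slice theorem, any metric $C^2$-close to $g_1$ has, up to conjugation fixing $G$, isometry group contained in $\operatorname{Iso}(M,g_1)=H$, so $\operatorname{Iso}(M,g_1+\epsilon h)$ is a proper subgroup of $H$ still containing $G$. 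Because $H$ is finite, iterating this finitely many times — or carrying it out in one stroke via a Baire-category/transversality argument on the space of $G$-invariant negative-Ricci metrics — yields a metric $g$ with $\operatorname{Ric}(g)<0$ and $\operatorname{Iso}(M,g)=G$.

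I expect the genuinely hard step to be the equivariant version of Lohkamp's deformation, specifically near the fixed-point sets of elements of $G$: there one cannot localize the warping freely, and must instead engineer local models compatible with the isotropy representations while still forcing every Ricci eigenvalue negative. A secondary delicate point is the symmetry-reduction step, where one needs the perturbation theory of isometry groups (Ebin slices) in the $G$-equivariant category in order to be sure that the isometry group neither grows under the perturbation nor stubbornly remains larger than $G$.
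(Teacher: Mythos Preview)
The paper does not prove this theorem. It is quoted in the introduction as a background result of Lohkamp, with a citation to \cite{Lo} and no argument supplied; it serves only to motivate why one cannot hope to bound $\#\operatorname{Iso}(M)$ in terms of dimension and Ricci bounds alone. There is therefore no ``paper's own proof'' to compare your proposal against.

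For what it is worth, your outline is a sensible sketch of how the ``if'' direction goes: average to a $G$-invariant metric, run Lohkamp's negative-Ricci deformation $G$-equivariantly, and then perturb within the $G$-invariant negative-Ricci metrics to strip away any extra symmetries, using that $\operatorname{Ric}<0$ is $C^2$-open and that Bochner already forces the isometry group of the intermediate metric to be finite. You have also correctly flagged the genuinely delicate points --- making Lohkamp's local warping models compatible with the isotropy representations near fixed-point strata, and controlling the isometry group under perturbation via an equivariant slice argument. To see how Lohkamp actually handles these, you need to go to \cite{Lo}; the present paper does not reproduce any of it.
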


Fifty years before Bochner's result came out, Hurwitz \cite{Hur} showed that when $X$ is a Riemann surface of genus $g\geq 2$, the order of the automorphism group of $X$, $\#\operatorname{Aut}(X)\leq 84(g-1)$. Later, the estimate of the order of the isometry group was generalized to hyperbolic manifolds by Huber~\cite{Hu}, to manifolds with sectional curvature bounded above from~0 by Im~Hof~\cite{Im}, to manifolds with non-positive sectional curvature and Ricci curvature negative at some point by Maeda~\cite{Ma} and to manifolds with non-positive sectional curvature and finite volume by Yamaguchi~\cite{Ya}. For general compact Riemannian manifolds with negative Ricci curvature, Katsuda~\cite{Ka} estimated the order of the isometry group by sectional curvature, dimension, diameter and injectivity radius. In~\cite{DSW}, Dai--Shen--Wei estimated the order of the isometry group by Ricci bounds, dimension, volume, and injectivity radius. Recently, Katsuda--Kobayashi~\cite{KK} gave a bound of the order of the isometry group for manifolds with negative Ricci curvature except for small portions.

There are also other generalizations of Bochner's theorem. In~\cite{Ro}, Rong showed that compact manifolds with negative Ricci curvature do not admit non-trivial invariant $F$-structure which includes the Killing vector field as a special case. Bagaev and Zhukova \cite{BZ} extended Bochner's theorem to Riemannian orbifolds with negative Ricci curvature. From the works of Deng--Hou \cite{DH} and Zhong--Zhong \cite{ZZ}, we know that a compact Finsler manifold with negative Ricci curvature has a finite isometry group. Van Limbeek~\cite{VL} estimated the order of the isometry group for manifolds on which circle actions do not exist. The list above is far from complete and can go on and on.

Our first main result is a generalization of Bochner's theorem to compact metric measure spaces with synthetic negative Ricci curvature.

\begin{Theorem}\label{mainthm}
Let $(X,d,m)$ be a compact metric measure space with $\theta^*(x)<0$ for all $x\in X$. Then $\operatorname{Iso}(X,d,m)$ is finite.
\end{Theorem}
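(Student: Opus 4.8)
The plan is to show that the compact group $G:=\operatorname{Iso}(X,d,m)$ is uniformly discrete, hence finite. First I would record the topological setup. Since $X$ is compact, $\operatorname{Iso}(X,d)$ is a compact group in the topology of uniform convergence — equivalently, under the metric $\rho(\phi,\psi):=\max_{x\in X}d(\phi(x),\psi(x))$ — by Arzel\`a--Ascoli applied to the equicontinuous family of $1$-Lipschitz maps, the usual argument showing a uniform limit of isometries is an isometry. Moreover $G$ is closed in $\operatorname{Iso}(X,d)$: if $\phi_n\to\phi$ uniformly and $(\phi_n)_\#m=m$, then $(\phi_n)_\#m\to\phi_\#m$ weakly (use $\int f\circ\phi_n\,dm\to\int f\circ\phi\,dm$ for $f\in C(X)$, $m(X)<\infty$), so $\phi_\#m=m$. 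Thus $G$ is compact. Since $\rho(\phi,\psi)=\max_x d(x,\psi^{-1}\phi(x))=:D(\psi^{-1}\phi)$, it suffices to find $r>0$ with $D(\phi)\ge r$ for every $\phi\in G\setminus\{\operatorname{id}\}$: that makes $(G,\rho)$ compact and $r$-separated, hence finite.

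Next I would upgrade the pointwise hypothesis $\theta^*(x)<0$ to a uniform statement: there exist $\epsilon,r>0$ such that $\theta^+(y,z)\le-\epsilon$ whenever $y\ne z$ and $d(y,z)<r$. For each $x\in X$, since $\limsup_{y,z\to x}\theta^+(y,z)=\theta^*(x)<0$, reading the definition of $\limsup$ gives $r_x>0$ and $\epsilon_x:=-\theta^*(x)/2>0$ with $\theta^+(y,z)\le-\epsilon_x$ for all distinct $y,z\in B(x,r_x)$. Cover $X$ by finitely many $B(x_i,r_{x_i}/2)$, and set $r:=\min_i r_{x_i}/2$, $\epsilon:=\min_i\epsilon_{x_i}$; if $d(y,z)<r$ then $y\in B(x_i,r_{x_i}/2)$ for some $i$ and $z\in B(x_i,r_{x_i})$ by the triangle inequality, so $\theta^+(y,z)\le-\epsilon_{x_i}\le-\epsilon$.

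The heart of the proof is a Bochner-type dichotomy replacing the maximum-principle step of the classical argument. Fix $\phi\in G$ with $\phi\ne\operatorname{id}$ and suppose for contradiction that $D(\phi)<r$; by compactness choose $x_0$ realizing $d(x_0,\phi(x_0))=D(\phi)=:\delta\in(0,r)$. Because $\phi$ is a measure-preserving isometry it preserves the Cheeger energy, satisfies $\operatorname{Ent}_m(\phi_\#\mu)=\operatorname{Ent}_m(\mu)$, and is a $W$-isometry of $P_2(X)$, so it intertwines the heat flow: $\mathcal{H}_t\delta_{\phi(x_0)}=\mathcal{H}_t(\phi_\#\delta_{x_0})=\phi_\#(\mathcal{H}_t\delta_{x_0})$ for all $t>0$. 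Taking $(\operatorname{id},\phi)$ as a transport plan between $\mathcal{H}_t\delta_{x_0}$ and $\phi_\#\mathcal{H}_t\delta_{x_0}$ yields
\[
W(\mathcal{H}_t\delta_{x_0},\mathcal{H}_t\delta_{\phi(x_0)})^2\le\int_X d(z,\phi(z))^2\,\mathcal{H}_t\delta_{x_0}(dz)\le D(\phi)^2=\delta^2
\]
for every $t>0$. On the other hand $d(x_0,\phi(x_0))=\delta<r$ forces $\theta^+(x_0,\phi(x_0))\le-\epsilon$, i.e.\ $\liminf_{t\to0+}\frac1t\log\bigl(W(\mathcal{H}_t\delta_{x_0},\mathcal{H}_t\delta_{\phi(x_0)})/\delta\bigr)\ge\epsilon$, whence $W(\mathcal{H}_t\delta_{x_0},\mathcal{H}_t\delta_{\phi(x_0)})\ge\delta\,{\rm e}^{\epsilon t/2}>\delta$ for all sufficiently small $t>0$ — contradicting the displayed inequality. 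Hence $D(\phi)\ge r$ for every $\phi\ne\operatorname{id}$, and by the first paragraph $\operatorname{Iso}(X,d,m)$ is finite.

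The step I expect to need the most care is justifying, in the generality of Section~\ref{ricupp}, that a measure-preserving isometry commutes with $\mathcal{H}_t$ on Dirac masses: well-posedness of $\mathcal{H}_t\delta_{x_0}$ as a probability measure, the change-of-variables identity for $\operatorname{Ent}_m$, and the resulting intertwining of gradient flows (equivalently, invariance of the heat kernel, $p_t(\phi y,\phi z)=p_t(y,z)$). The remaining points — weak-closedness of $G$, the admissibility and cost of the transport plan, and the compactness upgrade of $\theta^*$ — are routine once the definitions are unwound.
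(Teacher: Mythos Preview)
Your proof is correct and matches the paper's almost exactly: the core step—your ``Bochner-type dichotomy''—is precisely the paper's Lemma~\ref{basiclem}, with the same compactness upgrade of $\theta^*<0$ to a uniform bound $\theta^+\le-\epsilon$ on nearby pairs, the same intertwining $\phi_\#\mathcal{H}_t\delta_y=\mathcal{H}_t\delta_{\phi(y)}$ justified via the metric gradient flow of $\operatorname{Ent}_m$, and the same contradiction at a maximizer of $d(x,\phi(x))$ using the admissible plan $(\operatorname{id}\times\phi)_\#\mathcal{H}_t\delta_y$. The only divergence is the closing move: you conclude by noting that $G$ is compact in the uniform metric and $r$-separated, hence finite, whereas the paper injects $G$ into a symmetric group $S_N$ via a finite $\lambda/4$-cover—yielding the explicit bound $\#G\le N!$ in~(\ref{fac}) that is then reused for the quantitative estimates in Section~\ref{section4}.
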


As a corollary, we have Bochner's theorem for weighted Riemannian manifolds:

\begin{Corollary}
Let $(M,g,m)$ be a closed weighted Riemannian manifold with $\infty$-Bakry--\'Emery Ricci tensor $\operatorname{Ric}_{\infty,m}<0$, then $\operatorname{Iso}(M,g,m)$ is finite.
\end{Corollary}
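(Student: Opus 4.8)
The plan is to obtain this as a direct consequence of Theorem~\ref{mainthm} combined with Sturm's identification of $\theta^*$ on weighted Riemannian manifolds recalled above. First I would observe that a closed weighted Riemannian manifold $(M,g,m)$ with $m={\rm e}^{-v}\mathrm{vol}_g$ is, in particular, a compact metric measure space in the sense of this article: $(M,d_g)$ is complete, separable and bounded, and since $M$ is compact and $v$ is continuous, $m$ is a finite Borel measure, hence $\sigma$-finite and finite on bounded sets. Moreover, by the Myers--Steenrod theorem the isometries of the metric space $(M,d_g)$ are exactly the Riemannian isometries of $(M,g)$, so that $\operatorname{Iso}(M,g,m)=\operatorname{Iso}(M,d_g,m)$ with the notation of the theorem.

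Next I would invoke Sturm's formula $\theta^*(x)=\sup\big\{\operatorname{Ric}_{\infty,m}(\xi,\xi)/|\xi|^2\colon \xi\in T_xM,\ \xi\neq0\big\}$ stated in the introduction. Since $v\in C^3$, the symmetric $2$-tensor $\operatorname{Ric}_{\infty,m}=\operatorname{Ric}+\nabla^2v$ is continuous, so the function $(x,\xi)\mapsto\operatorname{Ric}_{\infty,m}(\xi,\xi)$ is continuous on the unit sphere bundle $SM$. Because $M$ is closed, $SM$ is compact, hence this function attains its maximum and $\sup_{x\in M}\theta^*(x)=\max_{(x,\xi)\in SM}\operatorname{Ric}_{\infty,m}(\xi,\xi)$. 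The hypothesis $\operatorname{Ric}_{\infty,m}<0$ at every point forces this maximum to be strictly negative; in particular $\theta^*(x)<0$ for all $x\in M$. Theorem~\ref{mainthm} now applies to $(X,d,m)=(M,d_g,m)$ and yields that $\operatorname{Iso}(M,d_g,m)$, and therefore $\operatorname{Iso}(M,g,m)$, is finite.

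The only delicate point is the passage from the pointwise strict inequality $\operatorname{Ric}_{\infty,m}<0$ to the uniform bound $\sup_{x\in M}\theta^*(x)<0$ needed to apply Theorem~\ref{mainthm}; this is precisely where compactness of $M$ (equivalently of $SM$) enters, and also where the closedness assumption cannot be weakened, since on a noncompact manifold with $\operatorname{Ric}_{\infty,m}<0$ the supremum could approach $0$. Apart from this bookkeeping, no genuine obstacle remains, as all the substantive content is carried by Theorem~\ref{mainthm}.
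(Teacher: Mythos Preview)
Your proposal is correct and follows the same route the paper intends: deduce the corollary from Theorem~\ref{mainthm} via Sturm's identification of $\theta^*$ with the largest eigenvalue of $\operatorname{Ric}_{\infty,m}$. The paper does not spell out a separate proof of the corollary; it presents it as an immediate consequence, and your argument supplies exactly the missing details (the metric-measure-space structure, Myers--Steenrod, and the identification of $\theta^*$).

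One small correction of emphasis: Theorem~\ref{mainthm} only asks that $\theta^*(x)<0$ for each $x\in X$, not that $\sup_{x\in M}\theta^*(x)<0$. So the ``delicate point'' you highlight is milder than stated: for a fixed $x$, compactness of the fiber $S_xM$ already gives $\theta^*(x)=\max_{\xi\in S_xM}\operatorname{Ric}_{\infty,m}(\xi,\xi)<0$, with no need to pass to the full sphere bundle $SM$. Your stronger uniform conclusion is of course true on a closed manifold, but it is not required to invoke Theorem~\ref{mainthm}.
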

This corollary can also be obtained by considering the Laplacian of the square norm of divergence free Killing vector field on $(M,g,m)$. In fact, with this method, we actually have a~stronger theorem.

\begin{Theorem}\label{nricci}
Let $(M,g,m)$ be a closed $n$-dimensional weighted Riemannian manifold. If the $N$-Bakry--\'Emery Ricci tensor $\operatorname{Ric}_{N,m}<0$ for some $n\le N\le\infty$, then $\operatorname{Iso}(M,g,m)$ is finite.
\end{Theorem}
In view of this theorem, it should be expected that if there is a notion of synthetic $N$-Ricci curvature upper bound in the future, then Bochner's theorem should hold on spaces with synthetic negative $N$-Ricci curvature.

Our next result estimates $\#\operatorname{Iso}(M,g,m)$ by sectional curvature and other geometric quantities for a weighted Riemannian manifolds $M$ with negative $\infty$-Bakry--\'Emery Ricci curvature except for small portions. This is a generalization of the main theorem in \cite{Ka} and Theorem~0.2 in~\cite{KK}. We denote the sectional curvature by~$K_M$, the injectivity radius by $\mathsf{inj}_M$, and the $L^p(M,m)$ norm by $\|\cdot\|_p$, i.e., for any Borel function $f\colon M\to\mathbb R$
\[\|f\|_p=\left( \int_M|f(x)|^p\,{\rm d}m(x)\right)^{1/p}.\]

\begin{Theorem}\label{estimate}
Let $(M,g,m)$ be a closed $n$-dimensional weighted Riemannian manifold satis\-fying
\begin{gather*}
|K_M|\leq\Lambda_1,\qquad |\nabla \operatorname{Ric}_{\infty,m}|\leq\Lambda_2,\qquad m(M)\leq V,\qquad \operatorname{diam}(M)\leq D,\qquad \mathsf{inj}_M\geq i_0,\\
 N\geq n\geq 3,\qquad \operatorname{Ric}_{N,m}\geq-\Lambda_3
\end{gather*}
for some fixed positive constants $i_0$, $N$, $\Lambda_1$, $\Lambda_2$, $\Lambda_3$, $V$, $D$.
If for some $w>0$,
\begin{align}\label{ricnegative}
\|(\theta^*+w)_+\|_{n/2}<\min\left(\frac{1}{4A},\frac{w}{4B}\right),
\end{align}
where $(\theta^*+w)_+=\max\{\theta^*+w,0\}$, and $A$, $B$ are constants such that the Sobolev inequality
\begin{align}\label{sobolev}
\|f\|_{2n/(n-2)}^2\le A\|Df\|_2^2+B\|f\|_2^2
\end{align}
 holds for all $f\in W^{1,2}(M,g,m)$, then there exists a constant $L_1=L_1(n,i_0,N,\Lambda_1,\Lambda_2,\Lambda_3,V,D,w,\allowbreak A,B)$ such that $\#\operatorname{Iso}(M,g,m)\leq L_1$.
\end{Theorem}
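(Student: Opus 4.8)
The plan is to realize $G:=\operatorname{Iso}(M,g,m)$ as a subset of the orthonormal frame bundle $\mathcal F M$ and to bound $\#G$ by a packing argument there. Since an isometry of connected $M$ is determined by its $1$-jet at a point, $\operatorname{Iso}(M,g)$ acts freely and isometrically on $\mathcal F M$ with its canonical (connection) metric $g_{\mathcal F}$; fixing a frame $u_0$ over $x_0\in M$ identifies $G$ with the orbit $G\cdot u_0\subset\mathcal F M$. I will show this orbit is $\eta$-separated for some $\eta>0$ depending only on $n,i_0,N,\Lambda_1,\Lambda_2,\Lambda_3,V,D,w,A,B$. Granting that, $|K_M|\le\Lambda_1$ and $n$ bound the curvature of $g_{\mathcal F}$ --- hence bound below the volume of $(\eta/2)$-balls --- while $\operatorname{vol}_{g_{\mathcal F}}(\mathcal F M)=\operatorname{vol}(O(n))\cdot\operatorname{vol}_g(M)$ is bounded above by Bishop--Gromov from $\operatorname{Ric}(g)\ge-(n-1)\Lambda_1$ and $\operatorname{diam}(M)\le D$; a volume count then yields $\#G=\#(G\cdot u_0)\le L_1$ with $L_1$ depending only on the data above. (Since $\mathcal F M$ is compact, $\eta$-separation also makes finiteness of $G$ automatic.)

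For the separation, if $\phi u_0$ and $\psi u_0$ are $\eta$-close then $\rho:=\phi^{-1}\psi\in G$ has $d_{g_{\mathcal F}}(\rho u_0,u_0)\le\eta$, i.e.\ $\rho$ is $C^1$-close to the identity at $x_0$; as $d\rho$ carries Jacobi fields to Jacobi fields, under $|K_M|\le\Lambda_1$ and $\operatorname{diam}(M)\le D$ this smallness propagates along minimizing geodesics from $x_0$ to give $\sup_{x\in M}d(x,\rho(x))\le c_1\eta$, $c_1=c_1(n,\Lambda_1,D)$. Everything then reduces to the rigidity claim: there is $\epsilon_0>0$, depending only on the data, such that every $\rho\in G$ with $\sup_x d(x,\rho(x))\le\epsilon_0$ equals $\operatorname{id}$; one then sets $\eta:=\epsilon_0/c_1$.

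To prove the rigidity claim, write $\rho=\exp(\xi)$ for the smooth vector field $\xi(x):=\exp_x^{-1}(\rho(x))$, well defined with $|\xi|\le\epsilon_0<\mathsf{inj}_M$. Quantitative rigidity of isometries --- harmonic coordinates via $|K_M|\le\Lambda_1$, $|\nabla\operatorname{Ric}_{\infty,m}|\le\Lambda_2$, $\mathsf{inj}_M\ge i_0$, plus elliptic estimates --- gives $\|\xi\|_{C^1}\le c_2\epsilon_0$, and, since $\rho$ is a genuine $g$-isometry preserving $m$, the defect of $\xi$ from being an $m$-divergence-free Killing field is quadratically small. Inserting this into the integrated weighted Bochner identity --- which for an $m$-divergence-free Killing field reads $\int_M(|\nabla\xi|^2-\operatorname{Ric}_{\infty,m}(\xi,\xi))\,{\rm d}m=0$, the weighted Bochner identity used for Theorem~\ref{nricci} --- produces $\|\nabla\xi\|_2^2\le\int_M\operatorname{Ric}_{\infty,m}(\xi,\xi)\,{\rm d}m+\mathcal E$ with $|\mathcal E|\le c_4\epsilon_0(\|\nabla\xi\|_2^2+\|\xi\|_2^2)$. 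Since $\operatorname{Ric}_{\infty,m}(\xi,\xi)\le\theta^*|\xi|^2$ pointwise, Hölder and the Sobolev inequality (\ref{sobolev}) give, with $\alpha:=\|(\theta^*+w)_+\|_{n/2}$,
\begin{align*}
\int_M\operatorname{Ric}_{\infty,m}(\xi,\xi)\,{\rm d}m\le-w\|\xi\|_2^2+\alpha\|\xi\|_{2n/(n-2)}^2\le-w\|\xi\|_2^2+\alpha\big(A\|\nabla\xi\|_2^2+B\|\xi\|_2^2\big),
\end{align*}
whence $(1-\alpha A-c_4\epsilon_0)\|\nabla\xi\|_2^2\le(-w+\alpha B+c_4\epsilon_0)\|\xi\|_2^2$. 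By (\ref{ricnegative}), $\alpha A<\frac14$ and $\alpha B<\frac w4$; choosing $\epsilon_0$ with $c_4\epsilon_0<\frac14$ and $c_4\epsilon_0<\frac w4$ makes the left coefficient $>\frac12$ and the right one $<-\frac w2$, so the left side is $\ge0$ and the right side $\le0$, forcing $\xi\equiv0$, i.e.\ $\rho=\operatorname{id}$. (With $\epsilon_0=0$ this is Bochner's theorem ``except for small portions'' applied to genuine elements of $\operatorname{Lie}(G)$.)

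The hard part is the rigidity step: obtaining $\|\xi\|_{C^1}\le c_2\epsilon_0$ and the quadratic defect bound with constants depending only on the assumed data, which needs the quantitative theory of almost-isometries (a $C^0$-small isometry is $C^k$-small with explicit constants from the geometric bounds). A further delicate point is that $v$ need not be uniformly bounded, so $|\operatorname{div}_m\xi|$ is only $O(\epsilon_0^2(1+|\nabla^2v|))$ pointwise; however $\nabla^2v=\operatorname{Ric}_{\infty,m}-\operatorname{Ric}$ has its positive part $L^{n/2}$-controlled through $\theta^*$ (using $(\theta^*)_+\le(\theta^*+w)_+$, $\|\operatorname{Ric}\|_\infty\le(n-1)\Lambda_1$, $m(M)\le V$) and its negative part bounded below via $\operatorname{Ric}_{N,m}\ge-\Lambda_3$, so after integration by parts every error term in the Bochner identity is dominated by a Hölder/Sobolev expression in $\xi$ and the argument closes with exactly the stated hypotheses. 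The propagation constant $c_1$ and the volume bounds for $\mathcal F M$ are routine Rauch and Bishop--Gromov comparison.
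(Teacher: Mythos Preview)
Your route is genuinely different from the paper's in both stages. For the counting, the paper does not use the frame bundle: once the rigidity lemma is established (any $\phi\in G$ with $\sup_x d(x,\phi(x))\le\delta$ is the identity), one covers $M$ by balls $B_{\delta/4}(x_1),\dots,B_{\delta/4}(x_N)$, injects $G$ into $S_N$ via $\phi\mapsto(i\mapsto\min\{j:\phi(x_i)\in B_{\delta/4}(x_j)\})$, and bounds $N$ by the generalized Bishop--Gromov inequality for $\mathsf{CD}(-\Lambda_3,N)$ spaces. Your frame-bundle packing is a valid alternative, but it requires the extra propagation lemma (smallness of the $1$-jet at one point implies global $C^0$-smallness), which the paper simply does not need.

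The substantive divergence is in the rigidity step. The paper never introduces the displacement vector field $\xi$ or any approximate Killing-field analysis; it works directly with the scalar function $d_\phi^2(x)=d(x,\phi(x))^2$ and obtains
\[
\Delta_m d_\phi^2 \;\ge\; -2\,\theta^+(x,\phi(x))\,d_\phi^2
\]
purely from optimal transport: since $\phi$ commutes with the entropy gradient flow $\mathcal H_t$, the coupling $(\mathrm{id}\times\phi)_\#\mathcal H_t\delta_x$ is admissible for $(\mathcal H_t\delta_x,\mathcal H_t\delta_{\phi(x)})$, giving $W^2\le H_t d_\phi^2(x)$, and differentiating at $t=0$ yields the inequality. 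Sturm's upper bound (Theorem~\ref{theta}) then replaces $\theta^+(x,\phi(x))$ by $\rho_\phi+C_1\tan^2(C_2 d_\phi)$ with $C_1,C_2$ depending only on $n,\Lambda_1$. One multiplies by $d_\phi^2$, integrates against $m$, and applies H\"older and the Sobolev inequality~\eqref{sobolev} with $f=d_\phi^2$; the hypothesis $|\nabla\operatorname{Ric}_{\infty,m}|\le\Lambda_2$ enters only to compare $\rho_\phi$ with $\theta^*$ along the short geodesic from $x$ to $\phi(x)$. No $C^1$ estimate on any vector field, no harmonic coordinates, and no information about $v$ beyond what is encoded in $\Delta_m$ and $\theta^*$ are used.

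Your error analysis, as written, has a gap. You invoke ``the weighted Bochner identity used for Theorem~\ref{nricci}'', but that derivation relies on the \emph{exact} identities $\langle\nabla_U\xi,W\rangle+\langle\nabla_W\xi,U\rangle=0$ and $\langle\nabla v,\xi\rangle=0$; once these hold only approximately, the computation of $\frac12\langle\nabla v,\nabla|\xi|^2\rangle$ produces a term $S(\nabla v,\xi)$ involving $|\nabla v|$ pointwise, on which the stated hypotheses give no direct control (you bound eigenvalues of $\nabla^2 v$, not $|\nabla v|$). This can in fact be repaired by using instead the integrated weighted Weitzenb\"ock identity $\|\nabla\xi\|_{L^2(m)}^2+\int\operatorname{Ric}_{\infty,m}(\xi,\xi)\,dm=\|d\xi\|_{L^2(m)}^2+\|\operatorname{div}_m\xi\|_{L^2(m)}^2$, which holds for \emph{all} $\xi$ and whose right-hand side differs from $2\|\nabla\xi\|^2$ by $-\frac12\|S\|^2+\|\operatorname{div}_m\xi\|^2$; combined with the pointwise bound $|S|\le C(|\nabla\xi|^2+|\xi|^2)$ (from $\rho^*g=g$, $|K_M|\le\Lambda_1$, and your $C^1$ estimate) this gives $\mathcal E\le C\epsilon_0^2(\|\nabla\xi\|_2^2+\|\xi\|_2^2)$ without any $\nabla v$ appearing. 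So your strategy can be salvaged, but not via the identity you cite. The paper's optimal-transport argument bypasses all of this and, by working with a scalar displacement function rather than a vector field, is the one that points toward the metric-measure-space setting emphasized in the paper.
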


Condition (\ref{ricnegative}) is saying that the Bakry--\'Emery Ricci curvature greater than $-w$ has small $L^{n/2}$ norm which is what we mean by ``manifolds with negative Bakry--\'Emery Ricci curvature except for small portions''.

Our final result generalizes Theorem~1.3 in~\cite{DSW} and Theorem~0.3 in~\cite{KK}. It replaces the sectional curvature in Theorem~\ref{estimate} by Ricci curvature.

\begin{Theorem}\label{estimate2}
Let $\big(M,g,m={\rm e}^{-v}\mathrm{vol}_g\big)$ be a closed $n$-dimensional weighted Riemannian manifold satisfying
\begin{gather*}
n\geq 3,\qquad |\operatorname{Ric}|\leq\Lambda_1,\qquad |\nabla \operatorname{Ric}_{\infty,m}|\leq\Lambda_2,\qquad \|{\rm e}^{-v}\|_\infty\le E,\\ \operatorname{diam}(M)\leq D, \qquad \mathsf{inj}_M\geq i_0\end{gather*}
for some fixed positive constants $i_0$, $\Lambda_1$, $\Lambda_2$, $E$, $D$.
If for some $w>0$,
\[
\|(\theta^*+w)_+\|_{n/2}<\min\left(\frac{1}{2A},\frac{w}{2B}\right),
\]
where $A,B$ are constants such that the Sobolev inequality~\eqref{sobolev} holds, then there exists a constant $L_2=L_2(n,i_0,\Lambda_1,\Lambda_2,E,D,w,A,B)$ such that $\#\operatorname{Iso}(M,g,m)\leq L_2$.
\end{Theorem}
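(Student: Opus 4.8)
The plan is to follow the proof of Theorem~\ref{estimate}, with the coordinate charts used there (supplied by the two‑sided sectional curvature bound) replaced by \emph{harmonic} coordinate charts, whose existence and estimates follow from $|\operatorname{Ric}|\le\Lambda_1$ and $\mathsf{inj}_M\ge i_0$ by the theorems of Anderson and Jost--Karcher. Fix in advance a small $\delta_0>0$, to be pinned down at the end in terms of the listed constants. First I would record the uniform data: there is a harmonic radius $r_h=r_h(n,\Lambda_1,i_0)>0$ such that around every point of $M$ there is a harmonic chart on a ball of radius $r_h$ in which $g$ is $W^{2,p}$-close to the Euclidean metric for every finite $p$, with uniform bounds; in particular $\operatorname{vol}_g(B(x,r))$ is comparable to $r^n$ for $r\le r_h$, and with $\operatorname{Ric}\ge-\Lambda_1$ and $\operatorname{diam}(M)\le D$ (Bishop--Gromov) one gets $v_0\le\operatorname{vol}_g(M)\le V_0$ with $v_0,V_0$ depending only on $n,\Lambda_1,i_0,D$, hence $m(M)\le EV_0$. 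Moreover $|\mathrm{Rm}|$ is bounded in $L^p(M,\mathrm{vol}_g)$ for every finite $p$ by the $W^{2,p}$-control of $g$ (this is where $|\operatorname{Ric}|\le\Lambda_1$, an $L^\infty$-bound, is used rather than a one‑sided Ricci bound), and, since the Sobolev inequality~\eqref{sobolev} holds, a standard iteration yields a uniform lower density $m(B(x,\rho_1))\ge c(n,A,B)>0$ at a fixed small scale $\rho_1$; combined with $m(M)\le EV_0$ this bounds $m(M)/m(B(x,\rho_1))$ from above.

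The net‑counting step is as in the proof of Theorem~\ref{estimate}: take a maximal $\delta_0$-separated set $\{p_1,\dots,p_N\}\subset M$; it is $\delta_0$-dense and, by the volume bounds, $N\le N(n,\Lambda_1,i_0,D,\delta_0)$. To each $\rho\in\operatorname{Iso}(X,d,m)$ attach the map $\sigma_\rho$ sending $i$ to the index of a nearest net point to $\rho(p_i)$. If $\#\operatorname{Iso}(X,d,m)>N^N$, two distinct $\phi,\psi\in\operatorname{Iso}(X,d,m)$ have $\sigma_\phi=\sigma_\psi$, so (isometries are $1$-Lipschitz and $\{p_i\}$ is $\delta_0$-dense) $\rho:=\phi^{-1}\psi$ is a \emph{nontrivial} element of $\operatorname{Iso}(X,d,m)$ with $\sup_x d(x,\rho(x))<4\delta_0$. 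Hence it suffices to show that, for $\delta_0$ small enough, $\operatorname{Iso}(X,d,m)$ contains no such $\rho$; then $L_2:=N^N$ works.

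For this last point, suppose $\rho\ne\operatorname{id}$ is an isometry with $\sup_x d(x,\rho(x))=:\delta\le 4\delta_0<\mathsf{inj}_M$ and put $V(x):=\exp_x^{-1}(\rho(x))$, a smooth vector field with $\|V\|_{C^0}=\delta$ and $V\not\equiv 0$. In a harmonic chart the isometry equation $\rho^*g=g$ is an elliptic system for $\rho$, and the crucial lemma is a uniform quantitative rigidity estimate $\|\rho-\operatorname{id}\|_{C^2}\le C_0\|\rho-\operatorname{id}\|_{C^0}$ over the admissible class (the linearization of the isometry equation at $\operatorname{id}$ is the overdetermined‑elliptic Killing operator, so $C^0$-smallness upgrades to $C^2$-smallness at a linear rate). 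Then $\|\nabla V\|_{C^0}\le C_1\delta$, $|V|\ge\delta/2$ on some ball $B(x_0,\rho_1)$, and the symmetrization defect obeys $|L_Vg|+|\operatorname{div}_mV|\le C\big(|\nabla V|^2+(1+|\mathrm{Rm}|)|V|^2\big)$ pointwise (the first‑order parts vanish since $\rho$ is an exact, $m$-preserving isometry). Integrating the weighted Bochner--Reilly identity over $M$ with respect to $m$,
\[\int_M|\nabla V|^2\,{\rm d}m=\int_M\operatorname{Ric}_{\infty,m}(V,V)\,{\rm d}m+\tfrac12\int_M|L_Vg|^2\,{\rm d}m-\int_M(\operatorname{div}_mV)^2\,{\rm d}m;\]
using $\|\nabla V\|_{C^0}\le C_1\delta$, $\|\mathrm{Rm}\|_{L^2(M,m)}\le C(n,\Lambda_1,i_0,D,E)$ and the lower density of $m$, the last two terms are at most $o_{\delta_0}(1)\big(\|\nabla V\|_{L^2(m)}^2+\|V\|_{L^2(m)}^2\big)$. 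On the other hand $\operatorname{Ric}_{\infty,m}(V,V)\le\theta^*|V|^2$ pointwise by Sturm's formula; writing $\theta^*\le(\theta^*+w)_+-w$ and applying H\"older with exponents $n/2$ and $n/(n-2)$, Kato's inequality $\|D|V|\|_2\le\|\nabla V\|_2$, and~\eqref{sobolev} yields
\[\big(1-cA-o_{\delta_0}(1)\big)\|\nabla V\|_{L^2(m)}^2\le\big(cB-w+o_{\delta_0}(1)\big)\|V\|_{L^2(m)}^2,\qquad c:=\big\|(\theta^*+w)_+\big\|_{n/2}.\]
Since $c<\tfrac1{2A}$ and $c<\tfrac w{2B}$, choosing $\delta_0$ so small that the $o_{\delta_0}(1)$ terms are below $\min\{\tfrac12-cA,\,w-cB\}$ makes the left‑hand coefficient positive and the right‑hand coefficient negative, forcing $V\equiv0$ and contradicting $\rho\ne\operatorname{id}$. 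This fixes $\delta_0$ as a function of $n,i_0,\Lambda_1,\Lambda_2,E,D,w,A,B$, and $L_2:=N^N$ is the claimed bound.

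I expect the main obstacle to be this rigidity step, and in particular two things: (i) the uniform quantitative regularity $\|\rho-\operatorname{id}\|_{C^2}\le C_0\|\rho-\operatorname{id}\|_{C^0}$ for near‑identity isometries, obtained from elliptic estimates for the isometry equation in harmonic coordinates --- this is where $|\operatorname{Ric}|\le\Lambda_1$ and $\mathsf{inj}_M\ge i_0$ replace the role played in Theorem~\ref{estimate} by $|K_M|\le\Lambda_1$, and where $|\nabla\operatorname{Ric}_{\infty,m}|\le\Lambda_2$ is spent to upgrade the coordinate bounds and to control the error terms --- and (ii) the sharp bookkeeping that makes the Bochner error genuinely small \emph{relative to} $\|V\|_{L^2(m)}^2$, which relies on $|\mathrm{Rm}|\in L^2$, on $\|{\rm e}^{-v}\|_\infty\le E$ to pass to $L^2(m)$, and on the lower density of $m$ extracted from~\eqref{sobolev}. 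The constants $\tfrac1{2A}$ and $\tfrac w{2B}$ in the hypothesis are exactly what is needed to absorb the Sobolev term and this error simultaneously.
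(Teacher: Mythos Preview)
Your route is genuinely different from the paper's. The paper never introduces harmonic coordinates, $L^p$ bounds on $|\mathrm{Rm}|$, or the displacement vector field $V=\exp_x^{-1}(\rho(x))$. Instead it works directly with the scalar displacement $d_\phi(x)=d(x,\phi(x))$ and proves (Lemma~\ref{lap2}, a weighted version of Dai--Shen--Wei's Lemma~4.2 obtained by Jacobi-field estimates under $|\operatorname{Ric}|\le\Lambda_1$, $\mathsf{inj}_M\ge i_0$) the distributional inequality $\Delta_m d_\phi\ge -d_\phi\rho_\phi-\epsilon G d_\phi$ with $\|G\|_p$ uniformly bounded. The endgame---multiply by $d_\phi$, integrate, H\"older, Sobolev~\eqref{sobolev}, then net-count---is essentially what you wrote, but applied to $d_\phi$ instead of $V$; the hypothesis $|\nabla\operatorname{Ric}_{\infty,m}|\le\Lambda_2$ enters only to replace $\rho_\phi$ by $\theta^*$ via~\eqref{rhophi}. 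Your Bochner--Reilly approach, if it goes through, would avoid the (non-effective) compactness argument buried in DSW and might yield an explicit constant, at the cost of heavier machinery.

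That said, there is a real gap in your rigidity step beyond the one you flag. The claim that $|L_Vg|+|\operatorname{div}_mV|\le C\big(|\nabla V|^2+(1+|\mathrm{Rm}|)|V|^2\big)$ because ``the first-order parts vanish'' is not automatic: $V$ is \emph{not} the Killing generator of $\rho$, even when $\rho$ lies on a one-parameter subgroup of $\operatorname{Iso}(M,g)$---it agrees with that generator only modulo quadratic terms. Converting ``$\rho^*g=g$, hence $L_Vg$ equals minus the quadratic remainder of $\rho^*g-g$'' into the stated pointwise bound requires a careful Taylor expansion of $(\exp_\cdot V)^*g$ in $V,\nabla V$ and curvature, \emph{and} $C^1$ control of the difference $V-\xi$ (i.e., exactly the $C^2$ rigidity you identify as the main obstacle, used once more). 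Similarly, $\rho_\#m=m$ gives $\operatorname{div}_mV=0$ only to leading order. None of this is standard; it would need to be proved. The paper bypasses all of it by importing the DSW lemma and never leaving the scalar function $d_\phi$.
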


\begin{Remark}
Theorem \ref{estimate} and Theorem \ref{estimate2} are very similar but their proofs are different. In the proof of Theorem~\ref{estimate}, we use optimal transport to prove a key lemma (Lemma \ref{lap}) which is an analogue of Lemma~4.2 in~\cite{DSW} and Proposition~2.2 in~\cite{KK}. On the contrary, the proof of Theorem \ref{estimate2} is totally differential geometric. We have essentially used Lemma~4.2 in~\cite{DSW} and Proposition~4.1 in \cite{KK} which is obtained by estimating the Jacobi fields. It would be interesting to know whether there is an estimate of the order of the measure preserving isometry group on metric measure space instead of weighted Riemannian manifold and we believe that the proof of Theorem~\ref{estimate} gives more insight in this direction.
\end{Remark}
\begin{Remark}
It should also be noted that the constant $L_1$ in Theorem~\ref{estimate} is computable while the constant~$L_2$ in Theorem~\ref{estimate2} is not because some compactness argument is used in Lemma~4.2 of~\cite{DSW} on which Theorem~\ref{estimate2} is relied.
\end{Remark}

The plan of the rest of the paper is as follows. In Section~\ref{ricupp}, we introduce synthetic Ricci curvature upper bounds. In Section~\ref{section3}, we prove our generalization of Bochner's theorem to metric measure
spaces. In Section~\ref{section4}, we give our estimates on the order of the measure preserving isometry groups.

\section{Synthetic Ricci curvature upper bounds}\label{ricupp}

Let $(X,d,m)$ be a metric measure space such that $\int_X{\rm e}^{-Cd^2(x_0,x)}\,{\rm d}m(x)<\infty$ for some $x_0\in X$ and $C>0$. We denote by $P(X)$ the set of all Borel probability measures on~$X$. We define the {2-Wasserstein distance} on $P_2(X)=\big\{\mu\in P(X)\colon \int_Xd^2(x_0,x)\,{\rm d}\mu(x)<\infty\text{ for some }x_0\in X\big\}$ by
\[W(\mu,\nu)=\left(\inf_{\pi}\int_{X\times X}d^2(x,y)\,{\rm d}\pi(x,y)\right)^{1/2},\]
where the infimum is taken among all $\pi\in P(X\times X)$ such that $\pi$ has marginals $\mu$ and $\nu$. Such~$\pi$ is called an admissible plan. The measure~$\pi$ at which the infimum is realized is called an optimal transport plan of $\mu$ and $\nu$, which always exists (see \cite[Chapter~4]{Vi}).

 We define the entropy functional $\operatorname{Ent}_m\colon P_2(X)\to\mathbb{R}\cup\{+\infty\}$ by
\[
\operatorname{Ent}_m(\mu)=
\begin{cases}
\displaystyle \int_X\rho\log\rho\, {\rm d}m, &\mu=\rho m,\\
+\infty, &\text{otherwise}.
\end{cases}
\]

Let $K\in\mathbb R$, $N>1$ be two numbers. For $t\in[0,1]$, we define the functions $\beta^{(K,N)}_t(x,y)$ on $X\times X$ by
\[
\beta^{(K,N)}_t(x,y)=
\begin{cases}
 +\infty &\text{if $K>0$ and $\alpha > \pi$},\\
\displaystyle \left(\frac{\sin(t\alpha)}{t\sin\alpha}\right)^{N-1}\ &\text{if $K>0$ and $\alpha \in[0,\pi]$},\\
 1 &\text{if $K =0$},\\
\displaystyle \left(\frac{\sinh(t\alpha)}{t\sinh\alpha}\right)^{N-1} &\text{if $K<0$},
\end{cases}
\]
where
$\alpha = \sqrt{\frac{|K|}{N -1}} d(x,y)$.

\begin{Definition}We say that a metric measure space $(X,d,m)$ satisfies {curvature dimension condition $\mathsf{CD}(K,\infty)$} if for any two measures $\mu_0$ and $\mu_1$ in $P_2(X)$, there exists some geodesic $(\mu_t)$ in $P_2(X)$ such that for all $0\le t\le 1$, we have
\[
\operatorname{Ent}_m(\mu_t)\leq(1-t) \operatorname{Ent}_m(\mu_0)+t \operatorname{Ent}_m(\mu_1)-\frac K2t(1-t)W^2(\mu_0,\mu_1).
\]
A metric measure space $(X,d,m)$ is said to satisfy {curvature dimension condition $\mathsf{CD}(K,N)$} for $1\le N<\infty$ if for any two measures $\mu_0$ and $\mu_1$ in $P_2(X)$, there exists some geodesic $(\mu_t=\rho_tm)$ in $P_2(X)$ such that for some optimal transport plan $\pi$ of $\mu_0$ and $\mu_1$ and all $0\le t\le 1$, we have
\begin{gather*}
\int_X\rho_t^{1-\frac 1N}(x)\, {\rm d}m(x)\geq (1-t)\int_{X\times X}\rho_0(x_0)^{-\frac 1N}\beta_{1-t}(x_0,x_1)^{\frac 1N}\,{\rm d}\pi(x_0,x_1)\\
\hphantom{\int_X\rho_t^{1-\frac 1N}(x)\, {\rm d}m(x)\geq}{} +t\int_{X\times X}\rho_1(x_1)^{-\frac 1N}\beta_{t}(x_0,x_1)^{\frac 1N}\,{\rm d}\pi(x_0,x_1). 
\end{gather*}
\end{Definition}

In the case of $n$-dimensional weighted Riemannian manifold, $\mathsf{CD}(K,\infty)$ is equivalent to $\operatorname{Ric}_{\infty,m}\geq K$, and $\mathsf{CD}(K,N)$ is equivalent to $N\geq n$, $\operatorname{Ric}_{N,m}\geq K$ (see, e.g., \cite{LV,St1,St2,Vi}).

Let $(X,d,m)$ be a metric measure space and $f\in L^2(X,m)$. We define the Cheeger energy of~$f$ by
\[
\mathsf{Ch}(f)=\inf\left\{\liminf_{n\to\infty}\frac{1}{2}\int_X|Df_n|^2{\rm d}m\colon \|f_n- f\|_{L^2(X,m)}\to 0, f_n\in \mathrm{Lip} (X,d)\right\},
\]
where for any function $g\colon X\to \mathbb R$, the slope $|Dg|$ is defined as
\[|Dg|(x)=\limsup_{y\to x}\frac{|g(y)-g(x)|}{d(y,x)}.\]
We also define the descending slope $|D^-g|$ by
\[|D^-g|(x)=\limsup_{y\to x}\frac{[g(y)-g(x)]^-}{d(y,x)}.\]
Being a convex lower semicontinuous function, the Cheeger energy admits a gradient flow $H_t$ on $L^2(X,m)$.

However, the Cheeger energy $\mathsf{Ch}$ is not neccessarily a quadratic form. We say that $(X,d,m)$ is infinitesimally Hilbertian if $\mathsf{Ch}$ is a quadratic form, i.e.,
\[
\mathsf{Ch}(f+g)+\mathsf{Ch}(f-g)=2 \mathsf{Ch}(f)+2 \mathsf{Ch}(g) \qquad\text{for every}\quad f, g \in L^{2}(X, m).
\]

We also define the metric gradient flow of a function $E\colon X\to\mathbb R\cup\{+\infty\}$ to be a locally absolutely continuous curve $(\mu_t)$ on $X$ such that for all $t\geq 0$ we have
\begin{align}\label{gradflow}E(\mu_0)=E(\mu_t)+\frac12\int_0^t|\mu_s^\prime|^2\,{\rm d}s+\frac12\int_0^t|D^-E|^2(\mu_s)\,{\rm d}s,\end{align}
where $|\mu_t^\prime|=\lim\limits_{s\to t}\frac{d(\mu_s,\mu_t)}{|s-t|}$.
We denote by $\mathcal{H}_t$ the metric gradient flow of $\operatorname{Ent}_m$ on~$P_2(X)$. For more details on metric gradient flow, see, e.g., \cite{AGS2}.

\begin{Definition}[Ambrosio--Gigli--Modino--Rajala \cite{AGMR}, Ambrosio--Gigli--Savar\'e \cite{AGS1}, Gigli~\cite{Gi}]
Let $K\in\mathbb R$ and $1<N\le \infty$. A metric measure space $(X,d,m)$ is said to have {$N$-Riemannian Ricci curvature} bounded below by $K$, or to satisfy $\mathsf{RCD}(K,N)$ condition, if $(X,d,m)$ satisfies $\mathsf{CD}(K,N)$ and $\mathsf{Ch}$ is a quadratic form on $L^2(X,m)$.
\end{Definition}

In $\mathsf{RCD}(K,\infty)$ spaces, heat flow behaves well.
It is the result of Ambrosio--Gigli--Savar\'e \cite{AGS,AGS1} that the gradient flow $H_t$ of $\mathsf{Ch}$ and the gradient flow $\mathcal{H}_t$ of $\operatorname{Ent}_m$ coincides and that for every $f\in L^2(X,m)\cap L^{\infty}(X,m)$, and every $x\in X$ we have
\begin{align}\label{iden}
H_tf(x)=\int_X f(z)\,{\rm d}\mathcal{H}_t\delta_x(z).
\end{align}
Moreover, the heat flow is a $\mathrm{EVI}_K$ gradient flow so that it satisfies the contraction property~(\ref{contract}).

Turning to the Ricci curvature upper bounds, we recall the definition of \[\theta^{+}(x,y)=-\liminf_{t\to 0+}\frac 1t\log(W(\mathcal{H}_t\delta_x,\mathcal{H}_t\delta_y)/d(x,y)).\] For a function $u\colon \mathbb R\to\mathbb R$, we write $\partial^-_tu:=\liminf\limits_{s\to t}(u(s)-u(t))/(s-t)$. Thus in this notation, $\theta^{+}(x,y)=-\left.\partial_t^-\right|_{t=0}\log W(\mathcal{H}_t\delta_x,\mathcal{H}_t\delta_{y})$. Let $\gamma\colon [a,b]\to M$ be a geodesic in a weighted Riemannian manifold $(M,g,m)$. We define the average $\infty$-Ricci curvature along $\gamma$ by
\[
\rho(\gamma)=\frac{1}{b-a}\int_{a}^{b} \operatorname{Ric}_{\infty,m}\left(\dot\gamma(t), \dot\gamma(t)\right) /\left|\dot\gamma(t)\right|^{2} {\rm d} t.
\]
Sturm has proved the following two sided bounds for $\theta^+(x,y)$.
\begin{Theorem}[Sturm \cite{St}]\label{theta}
Let $(M,g,m)$ be an $n$-dimensional weighted Riemannian manifold satisfying $\mathsf{RCD}(-K^\prime,N)$ condition for some finite $K^\prime\geq 0$, $N>1$. Let $x$, $y$ be non-conjugate points in~$M$, and \mbox{$\gamma\colon [0,T]\to M$} be a unit speed geodesic connecting the two points. Then we have
\begin{align}\label{ctrl}
\rho(\gamma)\leq \theta^{+}(x, y) \leq \rho(\gamma)+\sigma(\gamma) \cdot \tan ^{2}\big(\sqrt{\sigma(\gamma)} d(x, y) / 2\big),
\end{align}
where $d$ is the metric induced by $g$, and
\[\sigma(\gamma)=\max_{t\in[0,T]}\left(\sum_{i,j=1}^n|R(e_i,\dot\gamma,e_j,\dot\gamma)|^2\right)^{\frac12}\]
 for $R$ the Riemann curvature tensor and $\{e_i\}_{i=1}^n$ some parallel orthonormal frame along $\gamma$ such that $e_1=\dot\gamma$.
\end{Theorem}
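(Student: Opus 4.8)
The plan is to pin down the behaviour of $W(\mathcal H_t\delta_x,\mathcal H_t\delta_y)$ to first order in $t$ by combining the short‑time Gaussian structure of the Witten heat kernel with optimal‑transport estimates, thereby reducing the statement to a one‑dimensional Jacobi‑field (index‑form) comparison along $\gamma$. Concretely, writing $T=d(x,y)$ and letting $\{e_i\}_{i=1}^n$ be a parallel orthonormal frame along $\gamma$ with $e_1=\dot\gamma$, I would aim to prove the identity $\theta^{+}(x,y)=\rho(\gamma)+\frac1T\int_0^T\operatorname{tr}\!\big(\mathcal R(s)\,U(s)\big)\,{\rm d}s$, where $\mathcal R(s)\colon\zeta\mapsto R(\zeta,\dot\gamma(s))\dot\gamma(s)$ is the curvature endomorphism of $\dot\gamma(s)^{\perp}$ (so $\|\mathcal R(s)\|_{\rm HS}\le\sigma(\gamma)$) and $U$ is the matrix field solving $U''+\mathcal R U=-\mathcal R$ with $U(0)=U(T)=0$. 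Since $(M,g,m)$ is a weighted Riemannian manifold satisfying $\mathsf{RCD}(-K',N)$, the heat flow $\mathcal H_t$ is the semigroup of $\Delta_m$ and $\mathcal H_t\delta_x=p_t(x,\cdot)m$ with $p_t$ smooth and positive; by the short‑time expansion of the heat kernel of $\Delta_m$, in geodesic normal coordinates at $x$ the measure $(\exp_x^{-1})_\#(\mathcal H_t\delta_x)$ equals, up to an error that is negligible for us (exponentially small tails—where compactness of $M$ bounds $d^2$—and $O(t^2)$ corrections to low moments), the Gaussian of barycentre $-t\nabla v(x)$ and covariance $2t\,\mathrm{Id}$, depending smoothly on $x$.

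For the lower bound $\theta^{+}(x,y)\ge\rho(\gamma)$ I would exhibit the coupling $T_t=\exp_y\circ\Psi_t\circ P_\gamma\circ\exp_x^{-1}$, where $P_\gamma\colon T_xM\to T_yM$ is parallel transport along $\gamma$ and $\Psi_t$ is the affine map—translation by $t\big(P_\gamma\nabla v(x)-\nabla v(y)\big)$, linear part $\mathrm{Id}+O(t)$—that makes $(T_t)_\#\mathcal H_t\delta_x=\mathcal H_t\delta_y$ exactly. Then $W^2(\mathcal H_t\delta_x,\mathcal H_t\delta_y)\le\int d^2(z,T_tz)\,{\rm d}\mathcal H_t\delta_x(z)$; expanding $d^2(\exp_x\alpha,\exp_y\beta)$ to second order about $(x,y)$—legitimate since non‑conjugacy makes $d^2$ smooth there and uniqueness of Jacobi fields holds—and integrating against the Gaussian, the $\alpha$‑linear contributions cancel because $P_\gamma$ preserves $\langle\cdot,\dot\gamma\rangle$; the translation contributes $-2Tt\int_0^T\nabla^2v(\dot\gamma,\dot\gamma)\,{\rm d}s$ after writing $\langle P_\gamma\nabla v(x)-\nabla v(y),\dot\gamma(T)\rangle=-\int_0^T(v\circ\gamma)''$; and the quadratic term contributes $2Tt\sum_{i=2}^n I_\gamma(J_i,J_i)$, since the second variation of arclength for the variation $u\mapsto(\exp_x(u\xi),\exp_y(uP_\gamma\xi))$ is the index form $I_\gamma(J_\xi,J_\xi)$ of the connecting Jacobi field, and $J_i$ is the Jacobi field with $J_i(0)=e_i(0)$, $J_i(T)=e_i(T)$; all remaining terms are $o(t)$. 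Hence $W^2\le T^2-2Tt\big(\int_0^T\nabla^2v(\dot\gamma,\dot\gamma)-\sum_i I_\gamma(J_i,J_i)\big)+o(t)$, so $\theta^{+}(x,y)=-\liminf_{t\to0^+}\tfrac1t\log\!\big(W/d\big)\ge\tfrac1T\int_0^T\nabla^2v(\dot\gamma,\dot\gamma)-\tfrac1T\sum_i I_\gamma(J_i,J_i)$; since $J_i$ minimises $I_\gamma$ among fields with its boundary values, $\sum_i I_\gamma(J_i,J_i)\le\sum_i I_\gamma(E_i,E_i)=-\int_0^T\operatorname{Ric}(\dot\gamma,\dot\gamma)$ with $E_i$ the parallel extensions, giving $\theta^{+}(x,y)\ge\tfrac1T\int_0^T\operatorname{Ric}_{\infty,m}(\dot\gamma,\dot\gamma)=\rho(\gamma)$.

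For the upper bound $\theta^{+}(x,y)\le\rho(\gamma)+\sigma(\gamma)\tan^{2}\!\big(\sqrt{\sigma(\gamma)}\,d(x,y)/2\big)$ I need the matching lower bound $W^2\ge T^2-2Tt(\cdots)+o(t)$, i.e.\ asymptotic optimality of $T_t$; I would obtain this from Kantorovich duality, testing $\sup\{\int\phi^{c}\,{\rm d}\mathcal H_t\delta_x+\int\phi\,{\rm d}\mathcal H_t\delta_y\}$ against a function $\phi$ equal near $y$ to a suitable quadratic in $\exp_y^{-1}$ and cut off elsewhere, chosen so that its $c$‑transform near $x$ reproduces the quadratic arising from the $d^2$‑expansion. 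This upgrades the previous estimate to the equality $\theta^{+}(x,y)=\tfrac1T\int_0^T\nabla^2v(\dot\gamma,\dot\gamma)-\tfrac1T\sum_i I_\gamma(J_i,J_i)$, which a standard matrix‑Jacobi‑field computation rewrites as $\rho(\gamma)+\tfrac1T\int_0^T\operatorname{tr}(\mathcal R U)\,{\rm d}s$ with $U$ as above. Now $U(s)=\int_0^T G(s,\tau)\mathcal R(\tau)\,{\rm d}\tau$ for $G$ the Dirichlet Green's function of $-\tfrac{{\rm d}^2}{{\rm d}s^2}-\mathcal R$ on $[0,T]$, which is a positive operator because $\gamma$ is minimising and non‑conjugate; hence $\int_0^T\operatorname{tr}(\mathcal R U)\ge0$ (re‑confirming $\theta^+\ge\rho(\gamma)$), while comparing $\|G(s,\tau)\|_{\rm op}$ with the scalar Green's function $g_\sigma$ of $-\tfrac{{\rm d}^2}{{\rm d}s^2}-\sigma(\gamma)$ gives, with $\sigma=\sigma(\gamma)$,
\[
\int_0^T\operatorname{tr}(\mathcal R U)\le\sigma^2\!\int_0^T\!\!\int_0^T\! g_\sigma(s,\tau)\,{\rm d}s\,{\rm d}\tau=2\sqrt\sigma\,\tan\!\big(\sqrt\sigma\,T/2\big)-\sigma T\le T\sigma\tan^{2}\!\big(\sqrt\sigma\,T/2\big),
\]
the last step being the elementary inequality $\tan a\le a(1+\tan^2 a)$ on $[0,\pi/2)$; dividing by $T$ yields the claim.

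The crux is the heat‑kernel control—making "$\mathcal H_t\delta_x$ is an $\exp_x$‑pushforward of a Gaussian up to $o(t)$'' quantitative and uniform in $t$—and above all the lower bound on $W$: producing a globally admissible Kantorovich potential out of a local quadratic model and ruling out the $c$‑transform infimum being attained far from $\gamma$, where compactness of $M$ and the concentration of the heat‑kernel mass are essential. The Green's‑function comparison delivering the precise $\sigma\tan^2$ coefficient is delicate but self‑contained one‑dimensional analysis.
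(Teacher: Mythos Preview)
The paper does not contain a proof of this theorem: it is stated as a result of Sturm and cited from \cite{St}, with no argument given in the present article. Consequently there is nothing in this paper to compare your proposal against; any comparison would have to be made with Sturm's original paper.

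That said, your outline is broadly in the right spirit for results of this type---coupling the two heat-evolved Dirac masses via parallel transport to get an upper bound on $W$, using Kantorovich duality for the matching lower bound, and reducing to an index-form/Jacobi-field computation along $\gamma$---and the identification of the correction term with $\tfrac{1}{T}\int_0^T\operatorname{tr}(\mathcal R\,U)$ for $U$ solving $U''+\mathcal R U=-\mathcal R$ with Dirichlet data is the natural mechanism by which the $\sigma\tan^2$ bound emerges. The points you flag as delicate are indeed the genuine content: making the short-time Gaussian approximation of $\mathcal H_t\delta_x$ quantitative to order $o(t)$ in the Wasserstein distance, and---more seriously---producing a global $c$-concave potential from a local quadratic model so that the duality lower bound goes through. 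Your sketch does not actually carry these out, and in particular the claim that a locally defined quadratic, once cut off, has a $c$-transform whose infimum is attained near $\gamma$ requires a careful argument (the cutoff must not create spurious minimisers, and the error from the cutoff region must be shown to be $o(t)$ after integration against the heat kernel). Whether Sturm's proof proceeds along exactly these lines, or via a more direct differentiation of $t\mapsto W(\mathcal H_t\delta_x,\mathcal H_t\delta_y)$ using the known structure of Wasserstein geodesics on manifolds, you would need to consult \cite{St} to determine.
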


Taking the endpoints $x$, $y$ infinitely close to each other in~(\ref{ctrl}), we have
\begin{Corollary}\label{cor}Let $(M,g,m)$ be a weighted Riemannian manifold satisfying $\mathsf{RCD}(-K^\prime,N)$ condition for some finite $K^\prime\geq 0$, $N>1$.
Then we have \[\theta^* (x)=\sup\big\{\operatorname{Ric}_{\infty,m}( \xi, \xi)/|\xi|^2\colon \xi\in T_xM,\, \xi\neq0\big\}.\]
\end{Corollary}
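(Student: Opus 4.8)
The plan is to squeeze $\theta^*(x)$ from above and below by the number
\[
\lambda(x):=\sup\big\{\operatorname{Ric}_{\infty,m}(\xi,\xi)/|\xi|^2\colon \xi\in T_xM,\ \xi\ne 0\big\},
\]
applying the two-sided bound (\ref{ctrl}) of Theorem~\ref{theta} to the minimizing geodesics between points $y,z$ that collapse onto $x$. First I would record two standing facts. Since $g$ is smooth and $v\in C^3$, the tensor $\operatorname{Ric}_{\infty,m}=\operatorname{Ric}+\nabla^2v$ is continuous, hence its largest eigenvalue with respect to $g$, namely $p\mapsto\lambda(p)$, is a continuous function of $p$; and the Riemann tensor is bounded on any compact ball. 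Fix $r_0>0$ below the injectivity radius at $x$ so that distinct points of $\bar B(x,r_0)$ are non-conjugate and joined by a unique minimizing geodesic, and let $S<\infty$ be a bound with $\sigma(\gamma)\le S$ for every unit-speed geodesic $\gamma$ contained in $\bar B(x,3r_0)$.

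For the upper bound, take $\delta\in(0,r_0)$ and distinct $y,z$ with $d(x,y),d(x,z)<\delta$. The minimizing geodesic $\gamma$ from $y$ to $z$ has length $d(y,z)\le 2\delta$ and stays in $\bar B(x,3\delta)$, so the right inequality of (\ref{ctrl}), together with $\rho(\gamma)\le\max_{\bar B(x,3\delta)}\lambda$ and the elementary monotonicity of $\sigma\tan^2(\sqrt\sigma\, s/2)$ in $\sigma$ and $s$, gives
\[
\theta^+(y,z)\ \le\ \max_{p\in\bar B(x,3\delta)}\lambda(p)\ +\ S\tan^2\big(\sqrt S\,\delta\big).
\]
Taking the supremum over such $y,z$ and then letting $\delta\to 0^+$, continuity of $\lambda$ forces $\max_{\bar B(x,3\delta)}\lambda\to\lambda(x)$ while the error term tends to $0$, so $\theta^*(x)\le\lambda(x)$.

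For the reverse inequality I would use the left inequality of (\ref{ctrl}). By compactness of the unit sphere in $T_xM$ and continuity, choose a unit vector $\xi_0$ with $\operatorname{Ric}_{\infty,m}(\xi_0,\xi_0)=\lambda(x)$, let $\gamma$ be the unit-speed geodesic with $\gamma(0)=x$, $\dot\gamma(0)=\xi_0$, and set $y_\epsilon=\gamma(-\epsilon)$, $z_\epsilon=\gamma(\epsilon)$ for small $\epsilon>0$. Then $y_\epsilon,z_\epsilon\to x$, the segment $\gamma|_{[-\epsilon,\epsilon]}$ is the minimizing geodesic between them, and
\[
\theta^+(y_\epsilon,z_\epsilon)\ \ge\ \rho\big(\gamma|_{[-\epsilon,\epsilon]}\big)\ =\ \frac{1}{2\epsilon}\int_{-\epsilon}^{\epsilon}\operatorname{Ric}_{\infty,m}(\dot\gamma(t),\dot\gamma(t))\,{\rm d}t\ \longrightarrow\ \operatorname{Ric}_{\infty,m}(\xi_0,\xi_0)=\lambda(x)
\]
as $\epsilon\to0^+$, by continuity of $t\mapsto\operatorname{Ric}_{\infty,m}(\dot\gamma(t),\dot\gamma(t))$ along $\gamma$. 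Hence $\theta^*(x)=\limsup_{y,z\to x}\theta^+(y,z)\ge\lambda(x)$, and combining with the upper bound proves the corollary.

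The hard part is bookkeeping rather than conceptual: one must make the curvature error term $\sigma(\gamma)\tan^2\big(\sqrt{\sigma(\gamma)}\,d(y,z)/2\big)$ in (\ref{ctrl}) vanish \emph{uniformly} as $y,z\to x$ --- this is exactly where the a priori bound $S$ on $\sigma(\gamma)$ for all short geodesics near $x$, together with $d(y,z)\to0$, is used --- and one must justify $\lim_{\delta\to0}\max_{\bar B(x,3\delta)}\lambda=\lambda(x)$ via continuity and compactness. No input beyond Theorem~\ref{theta} is needed.
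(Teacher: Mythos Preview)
Your argument is correct and is precisely the approach the paper intends: the paper states the corollary with the single justification ``Taking the endpoints $x$, $y$ infinitely close to each other in~(\ref{ctrl}),'' and you have carefully filled in that limiting argument, controlling the error term $\sigma(\gamma)\tan^2(\sqrt{\sigma(\gamma)}\,d(y,z)/2)$ uniformly via a local bound on the curvature tensor and exhibiting a direction $\xi_0$ achieving the supremum for the lower bound. The only cosmetic quibble is that ``$r_0$ below the injectivity radius at $x$'' does not by itself guarantee that \emph{any two} points in $\bar B(x,r_0)$ are non-conjugate; one should instead choose $r_0$ so that the (continuous) injectivity radius exceeds $2r_0$ throughout $\bar B(x,r_0)$, which is harmless.
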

In view of the corollary, we may view $\theta^*(x)$ as a counterpart of $\operatorname{Ric}_{\infty,m}$ in metric measure space which justifies the Definition~\ref{ricupdef}.

\section[Generalization of Bochner's theorem to metric measure spaces]{Generalization of Bochner's theorem\\ to metric measure spaces}\label{section3}

In this section we will prove our first generalization of Bochner's theorem. A fundamental lemma is the following discrete version of Bochner's theorem which is a standard technique (cf.~\cite{Ka}). For any Borel map $f\colon X\to Y$ between metric spaces, $\mu$ a Borel measure on~$X$, we denote by~$f_\#\mu$ the Borel measure on~$Y$ defined by $f_\#\mu(A)=\mu\big(f^{-1}(A)\big)$ for all $A\subset Y$ Borel.

\begin{Lemma}\label{basiclem}
Let $(X,d,m)$ be a compact metric measure space with $\theta^*(x)<0$ for all $x\in X$. Then there exists $\lambda>0$ such that if $\phi\in \operatorname{Iso}(X,d,m)$ satisfies $d(x,\phi(x))\leq\lambda$ for all $x\in X$, then~$\phi$ is the identity map.
\end{Lemma}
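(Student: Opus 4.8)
The plan is to argue by contradiction, exploiting the compactness of $\operatorname{Iso}(X,d,m)$ together with the strict negativity of $\theta^*$. Suppose no such $\lambda$ exists. Then for every $n\in\mathbb N$ there is $\phi_n\in\operatorname{Iso}(X,d,m)$ with $\sup_{x\in X}d(x,\phi_n(x))\le 1/n$ but $\phi_n\neq\mathrm{id}$. Since $(X,d)$ is compact, the isometry group $\operatorname{Iso}(X,d)$ is compact in the uniform topology (Arzel\`a--Ascoli), and one checks easily that the condition of preserving $m$ is closed; so after passing to a subsequence $\phi_n\to\phi_\infty$ uniformly, and the estimate forces $\phi_\infty=\mathrm{id}$. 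Thus we have a sequence of nontrivial measure preserving isometries converging to the identity. The heart of the matter is then to contradict the strict inequality $\theta^*<0$; I would like to say that a ``one-parameter-like'' family of isometries accumulating at the identity behaves like a Killing field and forces $\operatorname{Ric}$ (or its synthetic avatar) to be $\ge 0$ somewhere.

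To make this precise in the metric measure setting, I would use the contraction/expansion dichotomy built into $\theta^+$. The key point is that for an \emph{isometry} $\phi$ preserving $m$, the pushforward commutes with the heat flow: $\mathcal H_t(\phi_\#\mu)=\phi_\#(\mathcal H_t\mu)$, because $\operatorname{Ent}_m$ and the Wasserstein distance (hence the gradient flow $\mathcal H_t$) are invariant under $\phi$. In particular $\mathcal H_t\delta_{\phi(x)}=\phi_\#(\mathcal H_t\delta_x)$, so $W(\mathcal H_t\delta_x,\mathcal H_t\delta_{\phi(x)})=W(\mathcal H_t\delta_x,\phi_\#\mathcal H_t\delta_x)$. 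Now fix a point $x$ and apply this with $y=\phi_n(x)$ for $n$ large, so that $d(x,\phi_n(x))$ is tiny but positive; by the definition of $\theta^+$ and the hypothesis $\theta^*(x)<0$ (which via $\theta^*(x)=\limsup_{y,z\to x}\theta^+(y,z)$ controls $\theta^+$ on a neighbourhood of $x$), we get for pairs of nearby points $y,z$ that $W(\mathcal H_t\delta_y,\mathcal H_t\delta_z)\ge e^{-(\theta^*(x)+\varepsilon)t}d(y,z)>d(y,z)$ for small $t$, i.e.\ the heat flow \emph{expands} distances between Diracs near $x$. I would combine this local expansion with a compactness/averaging argument over $X$: cover $X$ by finitely many balls on which $\theta^+$ is uniformly negative, choose a uniform $\bar\varepsilon>0$ and a uniform time $t_0>0$, and derive that for any isometry $\phi$ with $\sup_x d(x,\phi(x))$ small enough, $W(\mathcal H_{t_0}\delta_x,\phi_\#\mathcal H_{t_0}\delta_x)$ is strictly larger than $d(x,\phi(x))$ at points where the displacement is near its maximum. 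But $\phi$ preserves $m$ and commutes with $\mathcal H_{t_0}$, so the map $x\mapsto W(\mathcal H_{t_0}\delta_x,\phi_\#\mathcal H_{t_0}\delta_x)$ is itself ``$\phi$-equivariant'' in a way that forbids its supremum from strictly exceeding $\sup_x d(x,\phi(x))$ — roughly, iterating $\phi$ and using the triangle inequality and the contraction/non-expansion of $\mathcal H_{t_0}$ bounds this quantity by $d$-displacement, yielding a contradiction once the local expansion factor is genuinely $>1$.

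Concretely, I expect the cleanest route is: (i) reduce to a single nontrivial $\phi$ with displacement $<\lambda$; (ii) let $x_0$ realize $\max_x d(x,\phi(x))=:\ell>0$; (iii) show $W(\mathcal H_{t_0}\delta_{x_0},\mathcal H_{t_0}\delta_{\phi(x_0)})\ge e^{c t_0}\ell$ for a fixed $c>0$ coming from $-\theta^*>0$ and $\ell$ small; (iv) show the left side equals $W(\mathcal H_{t_0}\delta_{x_0},\phi_\#\mathcal H_{t_0}\delta_{x_0})$ and that this is $\le \max_x d(x,\phi(x))=\ell$ because $\mathcal H_{t_0}$ is a non-expansion on $P_2(X)$ and $\phi_\#$ moves any measure by at most $\ell$ in $W$ (the coupling ``$z\mapsto(z,\phi(z))$'' has cost $\le\int d(z,\phi(z))^2\,d\mathcal H_{t_0}\delta_{x_0}\le\ell^2$); (v) conclude $e^{ct_0}\ell\le\ell$, impossible. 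The main obstacle is step (iii): turning the pointwise asymptotic inequality $\theta^+(y,z)\le\theta^*(x)+\varepsilon$ into a \emph{uniform-in-the-pair, uniform-in-time} lower bound $W(\mathcal H_t\delta_y,\mathcal H_t\delta_z)\ge e^{-(\theta^*+\varepsilon)t}d(y,z)$ valid for all $t\in(0,t_0]$ and all $y,z$ in a fixed small ball, since $\theta^+$ is defined via a $\liminf$ as $t\to 0^+$ and the definition of $\theta^*$ only gives control in the limit $y,z\to x$. I would handle this by a compactness argument on $X$ (using continuity/upper semicontinuity properties of $(y,z,t)\mapsto W(\mathcal H_t\delta_y,\mathcal H_t\delta_z)$ and the contraction estimate~\eqref{contract} from the $\mathsf{RCD}$ lower bound to get two-sided control), extracting a single good scale $t_0$ and expansion rate that work simultaneously across the whole compact space.
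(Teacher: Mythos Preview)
Your concrete outline (i)--(v) is essentially the paper's proof, and step (iv) via the coupling $z\mapsto(z,\phi(z))$ is exactly what the paper does. The opening detour through a sequence $\phi_n\to\mathrm{id}$ and compactness of $\operatorname{Iso}(X,d,m)$ is unnecessary: the paper argues directly for a single $\phi$ with $\sup_x d(x,\phi(x))\le\lambda$, where $\lambda$ is produced by a finite cover $X=\bigcup_i B_{\lambda_{x_i}/2}(x_i)$ with $\theta^+(y,z)<-\epsilon(x_i)$ for all $y,z\in B_{\lambda_{x_i}}(x_i)$, and then sets $\lambda=\min_i\lambda_{x_i}/2$. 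This finite cover already gives you the uniform $\epsilon>0$ and the uniform scale $\lambda$ you need for (ii)--(iii); no further compactness is required.

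The ``main obstacle'' you flag in step (iii) is not an obstacle, and trying to overcome it leads you to invoke hypotheses that are not available. You do \emph{not} need a single $t_0$ working uniformly over all pairs, because the upper bound in (iv),
\[
W(\mathcal H_t\delta_{x_0},\mathcal H_t\delta_{\phi(x_0)})^2\le\int_X d(z,\phi(z))^2\,{\rm d}\mathcal H_t\delta_{x_0}(z)\le\ell^2,
\]
holds for \emph{every} $t>0$. On the other side, once the cover guarantees $\theta^+(x_0,\phi(x_0))<-\epsilon$, the very definition of $\theta^+$ as a $\liminf$ yields $W(\mathcal H_t\delta_{x_0},\mathcal H_t\delta_{\phi(x_0)})\ge e^{\epsilon t/2}\ell$ for all sufficiently small $t>0$, where ``sufficiently small'' is allowed to depend on the particular pair $(x_0,\phi(x_0))$. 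Pick any one such $t$ and combine with (iv): $e^{\epsilon t/2}\ell\le\ell$, hence $\ell=0$. In particular, no $\mathsf{RCD}$ lower bound is assumed in the lemma, and no non-expansion or contraction property of $\mathcal H_t$ is needed anywhere; the aside about using \eqref{contract} to get two-sided control should be dropped.
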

\begin{proof}
Since for any fixed $x\in X$, $\theta^*(x)=\limsup\limits_{y,z\to x}\theta^+(y,z)<0$, there exist $\epsilon=\epsilon(x)>0$ and $\lambda_x>0$ such that for all $y,z\in B_{\lambda_x}(x)$, we have $\theta^+(y,z)<-\epsilon$. Since $X=\bigcup_x B_{\lambda_x/2}(x)$, by compactness, there exist $x_1,\dots,x_N\in X$ such that $X=\bigcup_{i=1}^N B_{\lambda_{x_i}/2}(x_i)$. Now let $\lambda=\min\{\lambda_{x_i}/2\colon 1\le i\le N\}$. We claim that this $\lambda$ is what we want.

Let $y\in X$ and $z=\phi(y)$ be such that $d(y,z)=\max\limits_{x\in X}d(x,\phi(x))\leq\lambda$. There exists $1\le i\le N$, such that $y\in B_{\lambda_{x_i}/2}(x_i)$. Since $d(y,z)\leq\lambda$, we have $d(x_i,z)\leq d(x_i,y)+d(y,z)<\lambda_{x_i}$. Thus $y,z\in B_{\lambda_{x_i}}(x_i)$ for some $1\le i\le N$. Therefore, $\theta^+(y,z)=-\partial_t^-|_{t=0}\log(W(\mathcal{H}_t\delta_y,\mathcal{H}_t\delta_z))<-\epsilon$.
Hence for $t>0$ small, we have
\begin{gather}\label{ge}
W(\mathcal{H}_t\delta_y,\mathcal{H}_t\delta_z) \geq {\rm e}^{\epsilon t/2}d(y,z).
\end{gather}
On the other hand, since $\phi$ is an isometry that preserves the measure $m$, we have $W(\phi_\#\mu,\phi_\#\nu)=W(\mu,\nu)$, $\operatorname{Ent}_m(\phi_\#\mu)=\operatorname{Ent}_m(\mu)$, $|(\phi_\#\mu_t)^\prime|=|\mu_t^\prime|$, and $|D^- \operatorname{Ent}_m|(\phi_\#\mu)=|D^- \operatorname{Ent}_m|(\mu)$. In view of the definition (\ref{gradflow}), we see that $(\phi_\#\mu_t)$ is a gradient flow of $\operatorname{Ent}_m$ if $(\mu_t)$ is. Hence, $\phi_\# (\mathcal{H}_t\delta_y)=\mathcal{H}_t(\phi_\#\delta_y)=\mathcal{H}_t\delta_{\phi(y)}=\mathcal{H}_t\delta_z$. Therefore, $({\rm id}\times\phi)_\#(\mathcal{H}_t\delta_y)$ is an admissible plan of $(\mathcal{H}_t\delta_y,\mathcal{H}_t\delta_z)$. So we get for all $t> 0$
\begin{gather}\label{le}
W^2(\mathcal{H}_t\delta_y,\mathcal{H}_t\delta_z)\leq \int_{X}d^2(x,\phi(x))\,{\rm d}\mathcal{H}_t\delta_y(x)\leq
\max_{x\in X}d^2(x,\phi(x))=d^2(y,z).
\end{gather}

Combining (\ref{ge}) and (\ref{le}), we get, for $t>0$ small, $d^2(y,z)\geq {\rm e}^{\epsilon t}d^2(y,z)$, which is impossible unless $d(y,z)=0$, i.e., $\phi$ is the identity map.
\end{proof}

Now, we can prove our first theorem. The proof is exactly the same as the second part of the proof of \cite{Ka} which is strongly inspired by~\cite{Ma}.
\begin{proof}[Proof of Theorem \ref{mainthm}]
Take $\lambda$ as in Lemma \ref{basiclem} and $a=\lambda/4$. By compactness, let $x_1,\dots,x_N\allowbreak \in X$ be such that $X=\bigcup_{i=1}^N B_{a}(x_i)$. We define a map $F$ from the measure preserving isometry group $\operatorname{Iso}(X,d,m)$ to the symmetric group $S_N$ of degree $N$ by $F(\phi)\colon i\to j(i)$ where $j(i)$ is the smallest $j$ such that $\phi(x_i)\in B_a(x_j)$. We claim that $F$ is injective. Assume $F(\phi)=F(\psi)=j(\cdot)$. For any $x\in X$, we have $x\in B_a(x_i)$ for some $i$, then
\begin{align*}
d(\phi(x),\psi(x))&\leq d(\phi(x),\phi(x_i))+d(\phi(x_i),x_{j(i)})+d(x_{j(i)},\psi(x_i))+d(\psi(x_i),\psi(x))\\
&\leq 4a=\lambda.
\end{align*}
By Lemma \ref{basiclem}, $\phi=\psi$. Hence $F$ is injective and we get
\begin{gather}\label{fac}\# \operatorname{Iso}(X,d,m)\leq \#S_N= N!.\end{gather}This finishes the proof.
\end{proof}

Next we prove Theorem \ref{nricci}.
\begin{proof}[Proof of Theorem \ref{nricci}]Being a closed subgroup of the isometry group $\operatorname{Iso}(M,g)$, the measure preserving isometry group $\operatorname{Iso}(M,g,m)$ is a compact Lie group. (In general, the measure preserving isometry groups of $\mathsf{RCD}(K,N)$ spaces are Lie groups. See \cite{GS,So}.) So it suffices to show that its Lie algebra is of dimension $0$.
Let $\xi$ be a vector field on $M$ such that the flow $\phi_t$ of $\xi$ preserves $g$ and $m$, i.e., $(\phi_t)^*g=g$, $(\phi_t)_\#m=m$. Taking Lie derivatives, we have for all vector fields $U,W$ on $M$
\begin{gather}\langle \nabla_U\xi,W\rangle+\langle\nabla_W\xi,U\rangle=0,\label{antisymmetric}\\
\operatorname{div}_m\xi=\operatorname{div}_g\xi-\langle\nabla V, \xi\rangle=0,\label{divergencefree}\end{gather}
where $\langle\,,\,\rangle$ denotes $g$ and $\operatorname{div}_g$ is the divergence operator associated with~$g$. By (\ref{antisymmetric}), $\nabla\xi$ is antisymmetric, so $\operatorname{div}_g\xi=\operatorname{tr}(\nabla\xi)=0$, which by (\ref{divergencefree}) implies $\langle\nabla V,\xi\rangle=0$. Hence we obtain
\[\frac12\big\langle\nabla V,\nabla|\xi|^2\big\rangle=\langle\nabla_{\nabla V}\xi,\xi\rangle=-\langle\nabla_\xi\xi,\nabla V\rangle=-\nabla_\xi\langle\xi,\nabla V\rangle+\langle\nabla_\xi\nabla V,\xi\rangle=\nabla^2V(\xi,\xi).\]
By Bochner's formula for Killing vector field (see, e.g., \cite[Lemmas~2 and~3]{Bo}), we have
\[\frac12\Delta_g|\xi|^2=|\nabla\xi|^2-\operatorname{Ric}(\xi,\xi).\]
Hence we get
\[\frac12\Delta_m|\xi|^2=|\nabla\xi|^2-\operatorname{Ric}_{\infty,m}(\xi,\xi).\]
Noticing that $dV\otimes dV(\xi,\xi)=\langle\nabla V,\xi\rangle^2=0$,
we have, for all $n\le N\le\infty$,
\[\frac12\Delta_m|\xi|^2=|\nabla\xi|^2-\operatorname{Ric}_{N,m}(\xi,\xi).\]
Integrating the above equality over $M$, we obtain
\[\int_M \operatorname{Ric}_{N,m}(\xi,\xi)\,{\rm d}m=\int_M|\nabla\xi|^2\,{\rm d}m\geq 0.\]
Since $\operatorname{Ric}_{N,m}<0$, we get $\xi=0$.
\end{proof}

\section[Estimating the order of the measure preserving isometry group]{Estimating the order of the measure\\ preserving isometry group}\label{section4}

In this section, we give an explicit dependence of the upper bound in~(\ref{fac}). We will be discussing in the context of a compact weighted Riemannian manifold $(M,g,m)$. First we introduce some notations.

For any continuous map $\phi\colon M\to M$, we set $d_\phi(x)=d(x,\phi(x))$ and $\delta_\phi=\max\limits_{x\in M}d(x,\phi(x))$. If $\phi\colon M\to M$ is such a map that for all $x\in M$, $x$ and $\phi(x)$ are connected by a unique minimizing geodesic $\gamma$ (this is always true if $\delta_\phi < \mathsf{inj}_M$), then we define
\[\rho_{\phi}(x)=\frac{1}{d_\phi(x)}\int_0^{d_\phi(x)}\theta^*(\gamma(t) )\,{\rm d}t.\]

\begin{Lemma}\label{lap}
Let $(M,g,m)$ be a closed $n$-dimensional weighted Riemannian manifold with injectivity radius $\mathsf{inj}_M\geq i_0$ and sectional curvature $|K_M|\leq\Lambda$. Let $\phi\colon M\to M$ be a measure preserving isometry. Then there exist positive constants $\delta_0=\delta_0(i_0,\Lambda,n)$, $C_1=C_1(\Lambda,n)$, $C_2=C_2(\Lambda,n)$, such that if $\delta_\phi\le \delta$ then
\begin{gather}\label{lapdphi}
\Delta_m d_\phi^2\geq-2d_\phi^2\rho_\phi- C_1\tan^2(C_2d_\phi) d_\phi^2.
\end{gather}
\end{Lemma}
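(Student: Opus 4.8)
The plan is to reduce the inequality \eqref{lapdphi} to an estimate on the function $d_\phi^2$ that ultimately comes from the two-sided bound \eqref{ctrl} of Theorem~\ref{theta}, transported along the optimal couplings of the heat flow. First I would fix $x\in M$, let $\gamma\colon[0,d_\phi(x)]\to M$ be the unique unit-speed minimizing geodesic from $x$ to $\phi(x)$ (unique once $\delta_\phi<i_0$), and choose normal coordinates or a parallel orthonormal frame $\{e_i\}$ along $\gamma$ with $e_1=\dot\gamma$ in order to compute $\Delta_m d_\phi^2$ at $x$. The Witten Laplacian splits as $\Delta_m d_\phi^2=\Delta_g d_\phi^2-\langle\nabla v,\nabla d_\phi^2\rangle$, and I expect the second-order Riemannian part to be controlled by a Jacobi-field computation (this is where $|K_M|\le\Lambda$ and $\mathsf{inj}_M\ge i_0$ enter, giving the constants $\delta_0,C_1,C_2$), while the first-order weight term $-\langle\nabla v,\nabla d_\phi^2\rangle$ should be handled using the fact that $\phi$ preserves $m={\rm e}^{-v}\mathrm{vol}_g$. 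The central point is that $\Delta_m d_\phi^2$ at $x$ measures the second-order infinitesimal behavior of $t\mapsto W^2(\mathcal{H}_t\delta_x,\mathcal{H}_t\delta_{\phi(x)})$: since $\phi$ is a measure-preserving isometry, $\mathcal{H}_t\delta_{\phi(x)}=\phi_\#\mathcal{H}_t\delta_x$, and the ${\rm id}\times\phi$ coupling shows $W^2(\mathcal{H}_t\delta_x,\mathcal{H}_t\delta_{\phi(x)})\le\int_M d_\phi^2\,{\rm d}\mathcal{H}_t\delta_x=H_t(d_\phi^2)(x)$ by \eqref{iden}, with equality to first order.

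Concretely I would argue as follows. By definition $\theta^+(x,\phi(x))=-\liminf_{t\to0^+}\frac1t\log\bigl(W(\mathcal{H}_t\delta_x,\mathcal{H}_t\delta_{\phi(x)})/d_\phi(x)\bigr)$, so for every $\epsilon>0$ there are arbitrarily small $t>0$ with $W^2(\mathcal{H}_t\delta_x,\mathcal{H}_t\delta_{\phi(x)})\le {\rm e}^{(-2\theta^+(x,\phi(x))+\epsilon)t}d_\phi^2(x)$, hence $H_t(d_\phi^2)(x)\ge W^2(\mathcal{H}_t\delta_x,\mathcal{H}_t\delta_{\phi(x)})$ combined with the upper bound above is the wrong direction; instead I use the \emph{lower} bound $\rho(\gamma)\le\theta^+(x,\phi(x))$ from \eqref{ctrl} together with the fact that $\partial_t^+|_{t=0}H_t(d_\phi^2)(x)=\Delta_m d_\phi^2(x)$. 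Pointwise, $\Delta_m d_\phi^2(x)=\lim_{t\to0^+}\frac1t\bigl(H_t(d_\phi^2)(x)-d_\phi^2(x)\bigr)$, and since $H_t(d_\phi^2)(x)\ge W^2(\mathcal{H}_t\delta_x,\mathcal{H}_t\delta_{\phi(x)})\ge {\rm e}^{-2\theta^+(x,\phi(x))t}d_\phi^2(x)$ using \eqref{iden} and the contraction-type computation with the ${\rm id}\times\phi$ plan running backwards, differentiating at $t=0$ gives $\Delta_m d_\phi^2(x)\ge -2\theta^+(x,\phi(x))d_\phi^2(x)$. Finally I replace $\theta^+(x,\phi(x))$ using the right half of \eqref{ctrl}: $\theta^+(x,\phi(x))\le\rho(\gamma)+\sigma(\gamma)\tan^2(\sqrt{\sigma(\gamma)}\,d_\phi(x)/2)$, note that $\rho(\gamma)=\frac1{d_\phi(x)}\int_0^{d_\phi(x)}\operatorname{Ric}_{\infty,m}(\dot\gamma,\dot\gamma)\,{\rm d}t=\rho_\phi(x)$ by Corollary~\ref{cor} (so $\theta^*(\gamma(t))\ge\operatorname{Ric}_{\infty,m}(\dot\gamma(t),\dot\gamma(t))$ must be reconciled — see obstacle below), and bound $\sigma(\gamma)\le C(\Lambda,n)$ by $|K_M|\le\Lambda$, which yields $\Delta_m d_\phi^2(x)\ge -2d_\phi^2\rho_\phi-C_1\tan^2(C_2 d_\phi)d_\phi^2$ with $C_1=C_1(\Lambda,n)$, $C_2=C_2(\Lambda,n)$.

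There are two delicate points. The smaller one is that the heat-flow/Wasserstein argument needs $(M,g,m)$ to satisfy some $\mathsf{RCD}(-K',N)$ bound so that Theorem~\ref{theta} and the identity \eqref{iden} apply; on a closed weighted manifold this is automatic since $\operatorname{Ric}_{N,m}$ is bounded below, so I would just record $K'=K'(\Lambda,n,\|v\|_{C^2})$ — actually one can avoid any dependence on $v$ here because the final inequality only uses $\rho(\gamma)$ and $\sigma(\gamma)$, both curvature quantities, while the $\mathsf{RCD}$ hypothesis is only needed qualitatively. The main obstacle is matching conventions: Lemma~\ref{lap} is stated with $\rho_\phi(x)=\frac1{d_\phi}\int_0^{d_\phi}\theta^*(\gamma(t))\,{\rm d}t$ (an integral of the \emph{supremum} over directions), whereas the natural quantity produced by \eqref{ctrl} is $\rho(\gamma)=\frac1{d_\phi}\int_0^{d_\phi}\operatorname{Ric}_{\infty,m}(\dot\gamma,\dot\gamma)/|\dot\gamma|^2\,{\rm d}t\le\rho_\phi(x)$. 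So I must check that the inequality $\theta^+(x,\phi(x))\le\rho_\phi(x)+\sigma(\gamma)\tan^2(\cdots)$ — the version with $\theta^*$ rather than $\operatorname{Ric}_{\infty,m}$ along $\gamma$ — still holds; since $\rho(\gamma)\le\rho_\phi(x)$ this only weakens \eqref{ctrl}, so it is fine, but the argument needs to be written so that the direction of the inequality is unambiguous. Handling the first-order term $-\langle\nabla v,\nabla d_\phi^2\rangle$ cleanly — showing it does not spoil the clean form of \eqref{lapdphi} — will require the measure-preserving hypothesis and is the place where I expect the computation to be most technical, though conceptually it is subsumed in the statement that $\Delta_m d_\phi^2$ (not $\Delta_g d_\phi^2$) is what the Wasserstein second derivative computes, because $\mathcal{H}_t$ is the gradient flow of $\operatorname{Ent}_m$ with respect to the weighted structure.
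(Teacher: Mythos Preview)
Your proposal is correct and follows essentially the same route as the paper: use the $\mathrm{id}\times\phi$ coupling together with \eqref{iden} to get $H_t(d_\phi^2)(x)\ge W^2(\mathcal{H}_t\delta_x,\mathcal{H}_t\delta_{\phi(x)})$, differentiate at $t=0$ to obtain $\Delta_m d_\phi^2(x)\ge -2d_\phi^2(x)\,\theta^+(x,\phi(x))$, then invoke the upper bound in \eqref{ctrl} and bound $\sigma(\gamma)\le 2(n-1)\Lambda$ from $|K_M|\le\Lambda$ to get the stated constants (with $\delta_0=\min\{i_0/2,\pi/(2C_2)\}$). Your initial talk of Jacobi fields and splitting $\Delta_m=\Delta_g-\langle\nabla v,\nabla\cdot\rangle$ is unnecessary---as you yourself note at the end, working with $\mathcal{H}_t$ and $H_t$ relative to $m$ absorbs the weight automatically---and your worry about $\rho(\gamma)$ versus $\rho_\phi$ is resolved exactly as you say, since $\rho(\gamma)\le\rho_\phi(x)$ only weakens \eqref{ctrl}.
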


\begin{proof}
Let $\phi\colon M\to M$ be a measure preserving isometry.
With the same argument as (\ref{le}), we have
\[W^2(\mathcal{H}_t\delta_x,\mathcal{H}_t\delta_{\phi(x)})\leq \int_{X}d^2(z,\phi(z))\,{\rm d}\mathcal{H}_t\delta_x(z)=H_td_\phi^2(x), \]
where we have used the formula (\ref{iden}) in the last equality. Hence for $t>0$, we have
\[
\frac{W^2(\mathcal{H}_t\delta_x,\mathcal{H}_t\delta_{\phi(x)})-d_\phi^2(x)}{t}\leq\frac{H_td_\phi^2(x)-d_\phi^2(x)}{t}.
\]
Since
\[\lim_{t\to 0} W^2(\mathcal{H}_t\delta_x,\mathcal{H}_t\delta_{\phi(x)})=d_\phi^2(x) \qquad\text{and}\qquad\lim_{t\to 0}H_td_\phi^2(x)=d_\phi^2(x),\]
we obtain
\[\left.\partial_t^-\right|_{t=0}W^2(\mathcal{H}_t\delta_x,\mathcal{H}_t\delta_{\phi(x)})\leq\left.\partial_t^-\right|_{t=0}H_td_\phi^2(x).\]
Thus
\[2d_\phi(x)\left.\partial_t^-\right|_{t=0}W(\mathcal{H}_t\delta_x,\mathcal{H}_t\delta_{\phi(x)})\leq\Delta_m d_\phi^2(x).\]
By definition, we have
\[\theta^+(x,\phi(x))=-\frac{\left.\partial_t^-\right|_{t=0}W(\mathcal{H}_t\delta_x,\mathcal{H}_t\delta_{\phi(x)})}{d(x,\phi(x))},\]
so we get
\begin{align}\label{ob}
-2d_\phi^2(x)\theta^+(x,\phi(x))=2d_\phi(x)\left.\partial_t^-\right|_{t=0}W(\mathcal{H}_t\delta_x,\mathcal{H}_t\delta_{\phi(x)})\leq\Delta_m d_\phi^2(x).
\end{align}
Since $M$ is compact, it automatically satisfies some $\mathsf{RCD}(-K^\prime,N)$ condition. Choosing $\delta_0\le \frac{i_0}{2}$ and applying the upper bound estimate in (\ref{ctrl}) to $x,\phi(x)$, we have
\[
\theta^{+}(x, \phi(x)) \leq
\rho_\phi(x)+\sigma(\gamma) \cdot \tan ^{2}\big(\sqrt{\sigma(\gamma)} d_\phi(x) / 2\big).
\]
Since $|K_M|\leq \Lambda$, we have $|R(X,Y,X,Y)|\leq \Lambda$ for all $X\perp Y$ and $|X|=|Y|=1$. Let~$\{e_i\}_{i=1}^n$ be an orthonormal frame with $e_1=\dot\gamma$. Then we have $|R(e_i,e_j,e_i,e_j)|\leq \Lambda$ and $\big|R((e_i+e_j)/\sqrt 2,e_k,(e_i+e_j)/\sqrt 2,e_k)\big|\leq \Lambda$ for distinct $i$, $j$, $k$. It follows that $|R(e_i,\dot\gamma,e_j,\dot\gamma)|\leq 2\Lambda$. Observing that $R(e_i,\dot\gamma,e_j,\dot\gamma)=0$ if $i=1$ or $j=1$, we have
\begin{align*}
\sigma(\gamma)&=\max_{t\in[0,T]}\left(\sum_{i,j=1}^n|R(e_i,\dot\gamma,e_j,\dot\gamma)|^2\right)^{\frac12}=\max_{t\in[0,T]}\left(\sum_{i,j=2}^n|R(e_i,\dot\gamma,e_j,\dot\gamma)|^2\right)^{\frac12}\\
&\leq \big((n-1)^24\Lambda^2\big)^{\frac12}=2(n-1)\Lambda.
\end{align*}
Therefore if $\delta_\phi\le \delta_0=\min\big\{\frac{i_0}{2},\frac{\pi}{2C_2}\big\}$ where $C_2=\sqrt{(n-1)\Lambda/2}$, we have
\[\theta^{+}(x, \phi(x))\leq \rho_\phi(x)+2(n-1)\Lambda\tan^2\big(d_\phi\sqrt{(n-1)\Lambda/2}\big).\]
Combining~(\ref{ob}), we get
\[
\Delta_m d_\phi^2\geq-2d_\phi^2\rho_\phi-C_1\tan^2(C_2d_\phi) d_\phi^2,
\]
where $C_1=4(n-1)\Lambda$.
\end{proof}

The following lemma is a quantitative version of Lemma \ref{basiclem}.

\begin{Lemma}\label{estimatethm} Let $(M,g,m)$ be a closed weighted Riemannian manifold of dimension $n\geq 3$ with
\[|K_M|\leq\Lambda_1,\qquad |\nabla \operatorname{Ric}_{\infty,m}|\leq\Lambda_2,\qquad m(M)\leq V,\qquad \mathsf{inj}_M\geq i_0.\] If for some $w>0$
\[
\|(\theta^*+w)_+\|_{n/2}<\min\left(\frac{1}{4A},\frac{w}{4B}\right),
\]
where $A$, $B$ are constants such that the Sobolev inequality~\eqref{sobolev}
 holds, then there exists a constant $\delta=\delta(n,i_0,\Lambda_1,\Lambda_2,V,D,w,A,B)$ such that if $\delta_\phi\le\delta$ then $\phi={\rm id}$.
\end{Lemma}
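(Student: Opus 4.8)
The plan is to feed the differential inequality of Lemma~\ref{lap} into an $L^2$--Sobolev argument, using the hypothesis $|\nabla\operatorname{Ric}_{\infty,m}|\le\Lambda_2$ to replace the geodesic average $\rho_\phi$ by $\theta^*$ itself up to an error that vanishes as $\delta\to0$. Fix $\phi\in\operatorname{Iso}(M,g,m)$ with $\delta_\phi\le\delta$, where $\delta\le\delta_0(i_0,\Lambda_1,n)$ (the constant from Lemma~\ref{lap}, applied with $\Lambda=\Lambda_1$) will be chosen small; in particular $\delta<\mathsf{inj}_M$, so for every $x$ the points $x$ and $\phi(x)$ are joined by a unique minimizing geodesic $\gamma=\gamma_x$, and $f:=d_\phi^2$ is smooth, hence lies in $W^{1,2}(M,g,m)$. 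Lemma~\ref{lap} then gives, pointwise on $M$,
\[
\Delta_m f\ \ge\ -2f\rho_\phi-C_1\tan^2(C_2 d_\phi)\,f ,
\]
with $C_1=C_1(\Lambda_1,n)$, $C_2=C_2(\Lambda_1,n)$.

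Next I would record an elementary Lipschitz bound for $\theta^*$: since $\theta^*(x)$ is the largest eigenvalue of $\operatorname{Ric}_{\infty,m}$ at $x$, and the top eigenvalue of a symmetric $2$-tensor is $1$-Lipschitz in the operator (hence Hilbert--Schmidt) norm, parallel-transporting $\operatorname{Ric}_{\infty,m}$ along a short minimizing geodesic and integrating $|\nabla\operatorname{Ric}_{\infty,m}|\le\Lambda_2$ yields $|\theta^*(y)-\theta^*(z)|\le\Lambda_2\,d(y,z)$ whenever $d(y,z)<\mathsf{inj}_M$. Applying this along $\gamma_x$ gives $\theta^*(\gamma_x(t))\le\theta^*(x)+\Lambda_2 t$ for $0\le t\le d_\phi(x)$, hence $\rho_\phi(x)\le\theta^*(x)+\tfrac12\Lambda_2 d_\phi(x)\le\theta^*(x)+\tfrac12\Lambda_2\delta$. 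Using also $\theta^*\le(\theta^*+w)_+-w$ and $\tan^2(C_2 d_\phi)\le\tan^2(C_2\delta)$, the displayed inequality becomes
\[
\Delta_m f\ \ge\ \big(2w-\Lambda_2\delta-C_1\tan^2(C_2\delta)\big)f-2(\theta^*+w)_+\,f .
\]
Now choose $\delta=\delta(n,i_0,\Lambda_1,\Lambda_2,w)$ (a fortiori a function of the constants listed in the statement) small enough that $\Lambda_2\delta+C_1\tan^2(C_2\delta)\le w$; then $\Delta_m f\ge wf-2(\theta^*+w)_+f$ on $M$.

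With this clean inequality, multiply by $f\ge0$ and integrate over $M$ against $m$. Since $M$ is closed there is no boundary term, so $\int_M f\,\Delta_m f\,{\rm d}m=-\int_M|\nabla f|^2\,{\rm d}m$, and therefore
\[
\int_M|\nabla f|^2\,{\rm d}m+w\int_M f^2\,{\rm d}m\ \le\ 2\int_M(\theta^*+w)_+\,f^2\,{\rm d}m .
\]
By Hölder with exponents $n/2$ and $n/(n-2)$ the right-hand side is at most $2\|(\theta^*+w)_+\|_{n/2}\,\|f\|_{2n/(n-2)}^2$, and the Sobolev inequality~\eqref{sobolev} bounds $\|f\|_{2n/(n-2)}^2$ by $A\int_M|\nabla f|^2\,{\rm d}m+B\int_M f^2\,{\rm d}m$. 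Writing $\eta:=\|(\theta^*+w)_+\|_{n/2}$ and rearranging,
\[
(1-2A\eta)\int_M|\nabla f|^2\,{\rm d}m+(w-2B\eta)\int_M f^2\,{\rm d}m\ \le\ 0 .
\]
The hypothesis $\eta<\min\big(\tfrac1{4A},\tfrac{w}{4B}\big)$ makes both coefficients strictly positive, so $\int_M f^2\,{\rm d}m=0$, i.e.\ $f=d_\phi^2\equiv0$ and $\phi={\rm id}$.

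The only step that is not pure bookkeeping is the Lipschitz control of $\theta^*$: one has to compare $\operatorname{Ric}_{\infty,m}$ and its eigenvalues at different points through parallel transport and be careful about the normalization of the tensor norm $|\nabla\operatorname{Ric}_{\infty,m}|$; this is exactly where the hypothesis $|\nabla\operatorname{Ric}_{\infty,m}|\le\Lambda_2$ enters, and all remaining geometric errors (the $\tan^2$ term of Lemma~\ref{lap} and the gap between $\rho_\phi$ and $\theta^*$) are absorbed into the single requirement that $\delta$ be small. One could instead keep the geodesic average of $(\theta^*+w)_+$ along $\gamma_x$ and change variables through the geodesic homotopy $x\mapsto\exp_x\big(s\exp_x^{-1}\phi(x)\big)$, as in the differential-geometric arguments of \cite{DSW,KK}; that route produces an extra Jacobian factor, close to $1$ for $\delta$ small (which explains the constant $4$ in the hypothesis), but it requires extra care near the fixed-point set of $\phi$, whereas the Lipschitz argument above sidesteps this entirely.
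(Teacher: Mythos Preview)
Your proof is correct and follows essentially the same strategy as the paper: apply Lemma~\ref{lap}, use the bound $|\nabla\operatorname{Ric}_{\infty,m}|\le\Lambda_2$ to pass from $\rho_\phi$ to $\theta^*$ along the geodesic, multiply by $d_\phi^2$, integrate, and combine H\"older with the Sobolev inequality~\eqref{sobolev}. The only difference is the order of operations: you replace $\rho_\phi$ by $\theta^*+\tfrac12\Lambda_2\delta$ \emph{pointwise} before integrating, whereas the paper first integrates and then bounds $\|(\rho_\phi+w)_+\|_{n/2}\le\|(\theta^*+w)_+\|_{n/2}+\Lambda_2\delta\,V^{2/n}$ at the level of $L^{n/2}$ norms; your route is slightly cleaner in that the resulting $\delta$ depends only on $n,i_0,\Lambda_1,\Lambda_2,w$ and not on $V,A,B$.
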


\begin{proof}The argument we use here is very close to the proof of \cite[Theorem 2.1]{KK}. First suppose $\delta_\phi\le \delta\le \min\big\{\delta_0,\frac{1}{C_2}\arctan\sqrt{\frac{w}{2C_1}}\big\}$, where $\delta_0$, $C_1$, $C_2$ are as in Lemma~\ref{lap} so that (\ref{lapdphi}) holds and $C_1\tan^2(C_2\delta_\phi)\leq \frac w2$.
Multiplying both sides of~(\ref{lapdphi}) by~$d_\phi^2$ and integrating over~$M$, we get
\begin{align*}
0&\geq \int_M \big|Dd_\phi^2\big|^2{\rm d}m-\int_M\big(2\rho_\phi d_\phi^4+C_1\tan^2(C_2d_\phi) d^4_\phi\big) {\rm d}m\\
&= \big\|Dd_\phi^2\big\|_2^2-2\int_M(\rho_\phi +w)d_\phi^4{\rm d}m+\int_M\big(2w-C_1\tan^2(C_2d_\phi)\big)d_\phi^4{\rm d}m\\
&\geq \big\|Dd_\phi^2\big\|_2^2-2\|(\rho_\phi+w)_+\|_{n/2}\big\|d_\phi^2\big\|_{2n/(n-2)}^2+ \big(2w-C_1\tan^2(C_2\delta_\phi)\big)\big\|d_\phi^2\big\|_2^2\\
&\geq \big\|Dd_\phi^2\big\|_2^2-2\|(\rho_\phi+w)_+\|_{n/2}\big\|d_\phi^2\big\|_{2n/(n-2)}^2+\frac{3}{2}w\big\|d_\phi^2\big\|_2^2\\
&\geq \big\|Dd_\phi^2\big\|_2^2-2\|(\rho_\phi+w)_+\|_{n/2}\big(A\big\|Dd_\phi^2\big\|_2^2+ B\big\|d_\phi^2\big\|_2^2\big)+\frac{3}{2}w\big\|d_\phi^2\big\|_2^2,
\end{align*}
where in the last inequality we have used Sobolev inequality (\ref{sobolev}) with $f=d_\phi^2$.

We estimate the term $\|(\rho_\phi+w)_+\|_{n/2}$ by
\begin{align}
\|(\rho_\phi+w)_+\|_{n/2}&\leq\left\|\frac{1}{d_\phi(x)}\int_0^{d_\phi(x)}(\theta^*(\gamma(t) )-\theta^*(x))_+{\rm d}t\right\|_{n/2}+\|(\theta^*+w)_+\|_{n/2}\nonumber\\
&\leq \delta_\phi\Lambda_2V^{2/n}+\|(\theta^*+w)_+\|_{n/2}.\label{rhophi}
\end{align}
To get the last inequality, let $t\in[0,d_{\phi}(x)]$ be fixed. By Corollary \ref{cor}, we have
$\theta^*(\gamma(t))=\sup\big\{\operatorname{Ric}_{\infty,m}(\xi,\xi)/|\xi|^2\colon \xi\in T_{\gamma(t)}M, \xi\neq 0\big\}$. Let $\xi\in T_{\gamma(t)}M$, $|\xi|=1$ be such that $\theta^*(\gamma(t))=\operatorname{Ric}_{\infty,m}(\xi,\xi)$, and $\xi(s)$, $s\in[0,t]$ be a parallel vector field along $\gamma$ such that $\xi(t)=\xi$. Since $|\xi(0)|=1$, $\theta^*(x)\geq \operatorname{Ric}_{\infty,m}(\xi(0),\xi(0))$. Hence
\begin{align*}
(\theta^*(\gamma(t) )-\theta^*(x))_+&\leq (\operatorname{Ric}_{\infty,m}(\xi(t),\xi(t))-\operatorname{Ric}_{\infty,m}(\xi(0),\xi(0)))_+\leq t|\nabla \operatorname{Ric}_{\infty,m}|\\
&\leq \Lambda_2t\le\Lambda_2\delta_\phi .
\end{align*}

Therefore,
\begin{gather*}
0 \geq \left[1-2A\big(\delta_\phi\Lambda_2V^{2/n}+\|(\theta^*+w)_+\|_{n/2}\big)\right]\big\|Dd_\phi^2\big\|_2^2\\
\hphantom{0 \geq}{} +\left[\frac{3}{2}w-2B\big(\delta_\phi\Lambda_2V^{2/n}+\|(\theta^*+w)_+\|_{n/2}\big)\right]\big\|d_\phi^2\big\|_2^2\\
\hphantom{0}{} >\left(\frac12-2A\delta_\phi\Lambda_2V^{2/n}\right)\big\|Dd_\phi^2\big\|_2^2+\big(w-2B\delta_\phi\Lambda_2V^{2/n}\big)\big\|d_\phi^2\big\|_2^2.
\end{gather*}

If we require further that
\[\delta_\phi\le\delta=\min\left\{\delta_0,\frac{1}{C_2}\arctan\sqrt{\frac{w}{2C_1}}, \frac{1}{4A\Lambda_2V^{2/n}},\frac{w}{2B\Lambda_2V^{2/n}}\right\}\]
then $d_\phi=0$, so we get $\phi={\rm id}$.
\end{proof}

\begin{proof}[Proof of Theorem \ref{estimate}]
Since $\operatorname{Ric}_{N,m}\geq-\Lambda_3$ for some $\Lambda_3>0$ and $N\geq n\geq 3$, by generalized Bishop--Gromov theorem for $\mathsf{CD}(K,N)$ spaces (see, e.g., \cite[Theorem~2.3]{St2} or \cite[Theorem~30.11]{Vi}) the number of $\delta$-balls contained in $M$ is bounded above by a constant $L=L(D,\delta,\Lambda_3,N)$. Hence by the same argument as the proof of Theorem~\ref{mainthm} we get
\[\#\operatorname{Iso}(M,g,m)\leq L!=:L_1.\tag*{\qed}\]\renewcommand{\qed}{}
\end{proof}

Next we are going to prove Theorem \ref{estimate2}. We have the following analogue of Lemma~\ref{lap}.
\begin{Lemma}\label{lap2}
 Let $\epsilon>0$ be arbitrary and $\big(M,g,m={\rm e}^{-v}\mathrm {vol}_g\big)$ be a closed weighted Riemannian manifold of dimension $n\geq 3$ with $|\operatorname{Ric}|\leq\Lambda$, $\|{\rm e}^{-v}\|_\infty\le E$, $\operatorname{diam}(M)\leq D$ and $\mathsf{inj}_M\geq i_0$. Then there exists $\delta_1=\delta_1(\epsilon,n,\Lambda,E,D,i_0)$ such that if $\phi\colon M\to M$ is a measure preserving isometry of $M$ with $\delta_\phi\le\delta$ then we have, in the sense of distribution
\begin{gather}\label{laplacemdphi2}\Delta_md_\phi\geq -d_\phi\rho_\phi-\epsilon Gd_\phi,\end{gather}
where $G$ is a function on $M$ with a uniform $L^p(M,m)$ bound $\|G\|_p\le C(p,n,\Lambda,E,D,i_0)$ independent of $\epsilon$ for all $1\le p<\infty$.
\end{Lemma}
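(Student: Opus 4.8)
The plan is to mimic the structure of the proof of Lemma~\ref{lap}, but since we are now estimating $\Delta_m d_\phi$ rather than $\Delta_m d_\phi^2$ and we are not assuming sectional curvature bounds, we cannot invoke Sturm's curvature estimate~\eqref{ctrl} directly (that estimate involves $\sigma(\gamma)$, which is a full curvature-tensor quantity). Instead I would go through purely Riemannian geometry, as the Remark after Theorem~\ref{estimate2} promises. First I would observe that for a measure-preserving isometry $\phi$ with $\delta_\phi<\mathsf{inj}_M$, the distance displacement function $d_\phi(x)=d(x,\phi(x))$ is Lipschitz, smooth away from its zero set, and its Laplacian can be computed via the second variation of arc length, exactly as in the classical Bochner-type arguments of \cite{DSW} and \cite{KK}. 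The key identity is the one underlying Lemma~4.2 of \cite{DSW} / Proposition~4.1 of \cite{KK}: along the unique minimizing geodesic $\gamma$ from $x$ to $\phi(x)$, the Hessian of the displacement function picks up the integral of $\operatorname{Ric}$ along $\gamma$ plus a controllable error coming from the Jacobi field estimates, and the Jacobi-field error is governed solely by $|\operatorname{Ric}|\le\Lambda$, $\operatorname{diam}\le D$ and $\mathsf{inj}_M\ge i_0$.

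Second, I would pass from the unweighted Laplacian $\Delta_g d_\phi$ to the Witten Laplacian $\Delta_m d_\phi = \Delta_g d_\phi - \langle\nabla v,\nabla d_\phi\rangle$, and absorb the drift term $-\langle\nabla v,\nabla d_\phi\rangle$ together with the $\nabla^2 v$ contribution. This is where $\operatorname{Ric}_{\infty,m}=\operatorname{Ric}+\nabla^2 v$ enters, so that the leading term becomes $-d_\phi\,\rho_\phi$ with $\rho_\phi(x)=\frac{1}{d_\phi(x)}\int_0^{d_\phi(x)}\theta^*(\gamma(t))\,{\rm d}t$ (using Corollary~\ref{cor} to identify $\theta^*$ with the largest eigenvalue of $\operatorname{Ric}_{\infty,m}$, together with the same parallel-transport comparison as in the proof of Lemma~\ref{estimatethm} to replace the largest eigenvalue along $\gamma$ by $\theta^*(\gamma(t))$ itself at the cost of an error proportional to $|\nabla\operatorname{Ric}_{\infty,m}|$, which can be folded into $G$). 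The remaining error terms — the Jacobi-field remainder, the contribution of $|\nabla v|$, and the gradient term — are collected into a single nonnegative function $G$ on $M$. The smallness factor $\epsilon$ is extracted by noting that all these error terms carry a factor that tends to $0$ as $\delta_\phi\to 0$ (the displacement is small, so the geodesic is short and the deviation of the Jacobi fields from the Euclidean model is $O(\delta_\phi^2)$), so choosing $\delta_1$ small in terms of $\epsilon,n,\Lambda,E,D,i_0$ makes the whole error bounded by $\epsilon G d_\phi$.

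Third, I need the $L^p(M,m)$ bound $\|G\|_p\le C(p,n,\Lambda,E,D,i_0)$, uniform in $\epsilon$. Here $G$ is a pointwise expression built from $\operatorname{Ric}$, $\nabla\operatorname{Ric}_{\infty,m}$, $|\nabla v|$ and $\mathrm{e}^{-v}$ evaluated along the displacement geodesics; the $\operatorname{Ric}$ and $\nabla\operatorname{Ric}_{\infty,m}$ factors are pointwise bounded by $\Lambda_1$ and $\Lambda_2$ in Theorem~\ref{estimate2}, but $|\nabla v|$ is \emph{not} assumed bounded, so the bound on $\|G\|_p$ must come from integrating $|\nabla v|^p\,{\rm d}m = |\nabla v|^p\mathrm{e}^{-v}\,{\rm d}\mathrm{vol}_g$ and using a Bochner/Kato-type estimate for $\nabla v$ together with $\|\mathrm{e}^{-v}\|_\infty\le E$, $\operatorname{diam}\le D$, $|\operatorname{Ric}|\le\Lambda$ and $\mathsf{inj}_M\ge i_0$ to control $\int |\nabla v|^p\,{\rm d}m$; this is precisely the kind of estimate that forces the appearance of $\operatorname{Ric}$, the diameter, and the injectivity radius in the hypotheses, and it is the step I expect to be the main obstacle, since one must carefully convert the weighted quantity $\operatorname{Ric}_{\infty,m}$ back into $\operatorname{Ric}$ plus $\nabla^2 v$ and estimate the Hessian/gradient of $v$ in $L^p(m)$ without an a priori $C^1$ bound on $v$. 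Once $G$ and $\delta_1$ are in hand, inequality~\eqref{laplacemdphi2} follows in the distributional sense by the standard argument that the cut locus contribution to $\Delta_g d_\phi$ only makes the distributional Laplacian smaller, so the pointwise inequality away from the cut locus upgrades to a distributional inequality on all of $M$.
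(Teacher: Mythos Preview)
Your overall plan---invoke Lemma~4.2 of \cite{DSW} for $\Delta_g d_\phi$ and then pass to $\Delta_m d_\phi=\Delta_g d_\phi-\langle\nabla v,\nabla d_\phi\rangle$---is exactly the paper's route. The genuine gap is in your third paragraph: you expect a residual $|\nabla v|$ contribution in $G$ and identify bounding $\int_M|\nabla v|^p\,{\rm d}m$ (with no $C^1$ control on $v$) as ``the main obstacle.'' There is no such obstacle, because that term never appears: the drift term cancels \emph{exactly}, not approximately, against the $\nabla^2v$ piece.

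Concretely: integrating $\nabla^2v(\dot\gamma,\dot\gamma)$ along the minimizing geodesic $\gamma$ from $x$ to $\phi(x)$ gives the boundary terms $\langle\nabla v(\phi(x)),\dot\gamma(d_\phi)\rangle-\langle\nabla v(x),\dot\gamma(0)\rangle$. On the other hand, $\nabla d_\phi(x)=-\dot\gamma(0)+(\phi_*)^t\dot\gamma(d_\phi)$, so $\langle\nabla v(x),\nabla d_\phi(x)\rangle=\langle\phi_*\nabla v(x),\dot\gamma(d_\phi)\rangle-\langle\nabla v(x),\dot\gamma(0)\rangle$. Now use that $\phi$ is a \emph{measure-preserving} isometry: $\phi^*g=g$ and $\phi_\#(e^{-v}\mathrm{vol}_g)=e^{-v}\mathrm{vol}_g$ force $v\circ\phi=v$, hence $\phi_*\nabla v(x)=\nabla v(\phi(x))$. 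The two expressions therefore coincide, and
\[
\Delta_m d_\phi=\Delta_g d_\phi-\langle\nabla v,\nabla d_\phi\rangle\geq -\int_0^{d_\phi}\operatorname{Ric}_{\infty,m}(\dot\gamma)\,{\rm d}t-\epsilon G d_\phi
\]
with $G$ the \emph{unchanged} function from \cite{DSW}. The inequality $\int_0^{d_\phi}\operatorname{Ric}_{\infty,m}(\dot\gamma)\,{\rm d}t\le d_\phi\rho_\phi$ is then immediate from $\operatorname{Ric}_{\infty,m}(\dot\gamma,\dot\gamma)\le\theta^*(\gamma(t))$ (Corollary~\ref{cor}); no parallel-transport comparison or $|\nabla\operatorname{Ric}_{\infty,m}|$ term is needed here. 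Finally, the $L^p(M,m)$ bound on $G$ is just $\int|G|^p e^{-v}\,{\rm d}\mathrm{vol}_g\le E\,\|G\|_{L^p(\mathrm{vol})}^p$, using $\|e^{-v}\|_\infty\le E$ and the volume bound $\mathrm{vol}(M)\le V(n,\Lambda,D)$ from Bishop--Gromov. So the only role of $E$ is this trivial change of measure, not any control of $\nabla v$.
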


This lemma is a consequence of Lemma~4.2 in \cite{DSW}, which we cite here:
\begin{Lemma}[Dai--Shen--Wei \cite{DSW}]\label{dswlem}
Let $\epsilon>0$ be arbitrary and $M$ be a Riemannian manifold satisfying $|\operatorname{Ric}|\leq\Lambda$, $\mathsf{inj}_M\geq i_0$, and $\mathrm {vol}(M)\leq V$. Then there exists a $\delta_2=\delta_2 (\epsilon, n, \Lambda, i_{0}, V )$ such that for any isometry $\phi\colon M\to M$ with $\delta_{\phi} \leq \delta$, we have, in the sense of distribution
\begin{align}\label{laplacedphi2}\Delta_g d_\phi\geq-\int_0^{d_\phi}\operatorname{Ric}(\dot\gamma)\,{\rm d}t-\epsilon G d_\phi,\end{align}
where $\gamma$ is the unique minimizing geodesic connecting $x$ and $\phi(x)$ and $G$ is a function on $M$ with a uniform $L^p(M,\mathrm {vol})$ bound
\[
\|G\|_{L^p(M,\mathrm {vol})}\leq C_p(n,\Lambda,i_0,V)
\]
independent of $\epsilon$ for all $1 \leq p<\infty$.
Moreover, inequality \eqref{laplacedphi2} is pointwise for all $x\in M$ such that $x\neq \phi(x)$.
\end{Lemma}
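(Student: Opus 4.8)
The plan is to prove \eqref{laplacedphi2} first as a pointwise inequality on the open set $U:=\{x\colon\phi(x)\neq x\}$ via the second variation of arc length, then to absorb every curvature correction into the error $\epsilon G d_\phi$ using Jacobi field estimates together with a harmonic-coordinate bound on the full Riemann tensor, and finally to upgrade this to the distributional statement by analysing $d_\phi$ on the fixed-point set. The hypothesis $\delta<i_0$ is what makes the argument run: it forces $d_\phi(x)=d(x,\phi(x))\le\delta<\mathsf{inj}_M$ for every $x$, so $x$ and $\phi(x)$ are joined by a unique minimizing geodesic $\gamma_x$ depending smoothly on $x$, whence $d_\phi$ is smooth on $U$ with its only singularities lying on $\operatorname{Fix}(\phi)=\{d_\phi=0\}$.

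On $U$, fix $x$, set $L=d_\phi(x)$, and let $\gamma_x\colon[0,L]\to M$ be the unit-speed geodesic from $x$ to $\phi(x)$ with parallel orthonormal frame $\{e_i\}$, $e_1=\dot\gamma_x$. For each $E_i=e_i(0)$ I would take the variation $s\mapsto\big(\exp_x(sE_i),\,\phi(\exp_x(sE_i))\big)$, whose variation field is the Jacobi field $J_i$ along $\gamma_x$ with $J_i(0)=E_i$ and $J_i(L)=d\phi_x(E_i)$. Since $\phi$ is an isometry, $d\phi_x$ is a linear isometry, so $A:=P^{-1}\circ d\phi_x$ is orthogonal, where $P$ is parallel transport along $\gamma_x$. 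Writing $\operatorname{Hess}d_\phi(E_i,E_i)$ through the index form of $J_i$ and summing over $i$, the parallel part produces the leading term $-\int_0^{L}\operatorname{Ric}(\dot\gamma_x)\,{\rm d}t$ via $\sum_{i\ge2}\langle R(e_i,\dot\gamma_x)\dot\gamma_x,e_i\rangle=\operatorname{Ric}(\dot\gamma_x,\dot\gamma_x)$, while the endpoint data contribute a non-negative ``rotation'' term comparable to $\|A-I\|_{\mathrm{HS}}^2/d_\phi$ --- the convex cone effect already visible for a Euclidean rotation, where $d_\phi=c|x|$ and $\Delta d_\phi>0$.

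The main obstacle is to bound from below the remaining curvature corrections. Splitting each $J_i$ into its linear $A$-interpolant and a correction $h_i$ vanishing at both endpoints, a Gronwall--Rauch estimate yields $h_i=O\big(|R|L^2\big)$ and $\dot h_i=O\big(|R|L\big)$ along $\gamma_x$. The cross terms between the rotation and the curvature are of size $O\big(|R|\,\|A-I\|\,L\big)$, and the key point is that they are absorbed by the retained term $\|A-I\|_{\mathrm{HS}}^2/d_\phi$ through Cauchy--Schwarz, leaving a genuine error only of order $|R|^2 d_\phi^{3}$. The delicate issue is that the hypotheses bound $\operatorname{Ric}$ but not the sectional curvature, so $|R|$ is not controlled directly; here I would invoke the harmonic-coordinate theory of Anderson and Jost--Karcher, under which $|\operatorname{Ric}|\le\Lambda$, $\mathsf{inj}_M\ge i_0$ and $\mathrm{vol}(M)\le V$ put the metric in $W^{2,p}$ in harmonic charts and hence give $\|R\|_{L^p(M,\mathrm{vol})}\le C_p(n,\Lambda,i_0,V)$. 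Taking $G=|R|^2$ then yields the stated bound $\|G\|_p\le C_p$ independently of $\epsilon$, and writing $d_\phi^{3}=d_\phi^{2}\cdot d_\phi\le\delta^{2}d_\phi$ extracts the factor $\epsilon$ upon choosing $\delta=\delta_2(\epsilon,n,\Lambda,i_0,V)$ with $\delta^{2}\le\epsilon$. It is precisely this compactness ingredient that makes the constant non-effective.

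It remains to promote the pointwise bound on $U$ to a distributional one on all of $M$. The set $\operatorname{Fix}(\phi)$ is a totally geodesic submanifold on whose normal spaces $\phi$ acts as an orthogonal map $A_p$ with no eigenvalue $1$, so in normal coordinates $d_\phi(\exp_p v)\approx|(A_p-I)v|$ is a convex, positively homogeneous function of the normal variable. For $n\ge3$ such a cone contributes a non-negative singular part to the distributional Laplacian --- a locally integrable positive blow-up, or a positive atom in the codimension-one reflection case --- so $d_\phi$ carries no negative concentration along $\operatorname{Fix}(\phi)$. Hence for every nonnegative test function $\psi$ one has $\int_M d_\phi\,\Delta_g\psi\,{\rm d}\mathrm{vol}\ge\int_U(\Delta_g d_\phi)\,\psi\,{\rm d}\mathrm{vol}$, and substituting the pointwise inequality valid on $U$ establishes \eqref{laplacedphi2} in the sense of distributions; the final clause is exactly this pointwise computation on $U$.
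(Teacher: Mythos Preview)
The paper does not give its own proof of this lemma: it is quoted from Dai--Shen--Wei \cite{DSW}, Lemma~4.2, with the remark following it that a close reading of their argument already yields the slightly sharper formulation recorded here (pointwise on $\{x\neq\phi(x)\}$, with the Ricci integral isolated). There is therefore no in-paper proof to compare your attempt against.

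That said, your sketch reproduces the essential architecture of the Dai--Shen--Wei argument. The second variation of arclength with both endpoints moved by $\phi$, the decomposition of the Jacobi field $J_i$ into the $A$-interpolant plus a small correction controlled by $|R|L^2$, the absorption of the cross terms by the retained rotation term $\|A-I\|_{\mathrm{HS}}^2/d_\phi$, and --- the decisive point --- the appeal to Anderson's harmonic-coordinate compactness to trade the $|\operatorname{Ric}|$ bound, injectivity radius and volume for an $L^p$ bound on the full Riemann tensor, are all faithful to \cite{DSW}. Your identification of this last step as the source of the non-effective constant is exactly what the paper flags in its Remark after Theorem~\ref{estimate2}. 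Your treatment of the fixed-point set via the normal cone structure of $d_\phi$ near $\operatorname{Fix}(\phi)$ is likewise how the distributional upgrade is carried out at the end of \cite{DSW}. In short: the proposal is in line with the cited source, and nothing more is proved in the present paper.
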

\begin{Remark}
The statement of Lemma \ref{dswlem} may seem stronger than Lemma~4.2 of~\cite{DSW}, but a~close inspection on the proof of Lemma~4.2 of~\cite{DSW} shows that it actually proves the fact we state here. This fact is also used in Proposition~4.1 of~\cite{KK}.
\end{Remark}
\begin{proof}[Proof of Lemma \ref{lap2}]
Under the assumptions of Lemma~\ref{lap2}, by Bishop--Gromov comparison theorem, we have $\mathrm {vol}(M)\leq V=V(n,\Lambda,D)$. Taking $\delta_1=\delta_2(\epsilon, n, \Lambda, i_{0}, V(n,\Lambda,D))$, the condition of Lemma~\ref{dswlem} is satisfied.

Since $\|{\rm e}^{-v}\|_\infty\le E$, we have $\int_M|G|^p{\rm d}m=\int_M|G|^p {\rm e}^{-v}{\rm d}\mathrm {vol}\leq E C_p(n,\Lambda,D,i_0)^p$. Hence $\|G\|_p\le C(p,n,\Lambda,E,D,i_0)$.

Since $\operatorname{Ric}_{\infty,m}=\operatorname{Ric}+\nabla^2 v$, we have
\begin{align}
\int_0^{d_\phi}\operatorname{Ric}(\dot\gamma)\,{\rm d}t& =\int_0^{d_\phi}\operatorname{Ric}_{\infty,m}(\dot\gamma)\,{\rm d}t-\int_0^{d_\phi}\nabla^2 v(\dot\gamma,\dot\gamma)\,{\rm d}t\nonumber\\
&=\int_0^{d_\phi}\operatorname{Ric}_{\infty,m}(\dot\gamma)\,{\rm d}t-\int_0^{d_\phi}\langle\nabla_{\dot\gamma} \nabla v,\dot\gamma\rangle \,{\rm d}t\nonumber\\
&=\int_0^{d_\phi}\operatorname{Ric}_{\infty,m}(\dot\gamma)\,{\rm d}t-\int_0^{d_\phi}\left(\frac{{\rm d}}{{\rm d}t}\langle\nabla v,\dot\gamma\rangle-\langle\nabla v,\nabla_{\dot\gamma}\dot\gamma\rangle \right) {\rm d}t\nonumber\\
&=\int_0^{d_\phi}\operatorname{Ric}_{\infty,m}(\dot\gamma)\,{\rm d}t -\langle\nabla v(\gamma(d_\phi)),\dot\gamma(d_\phi)\rangle+\langle \nabla v(x),\dot\gamma(0)\rangle.\label{ric}
\end{align}
On the other hand, for any $x\neq\phi(x)$ we are going to calculate $\nabla d_\phi(x)$. For the sake of clarity, we denote $r_p(x)=d(p,x)$ where $d$ is the metric on $M$, and $x,p\in M$ are two points. Then for any $x\neq\phi(x)$, $\nabla d_\phi(x)=\nabla r_{\phi(x)}(x)+\nabla (r_x\circ\phi)(x)=-\dot\gamma(0)+(\phi_*)^t\dot\gamma(d_\phi)$ where $\phi_*\colon T_{x}M\to T_{\phi(x)}M$ is the tangent map and $(\phi_*)^t\colon T_{\phi(x)}M\to T_{x}M$ is the transpose of the tangent map defined by $\langle (\phi_*)^tU,W\rangle=\langle U,(\phi_*)W\rangle$ for all $U\in T_{\phi(x)}M$ and $W\in T_xM$. Therefore, we have
\begin{align}
\langle \nabla v(x),\nabla d_\phi(x)\rangle&=\left\langle\nabla v(x),-\dot\gamma(0)+(\phi_*)^t\dot\gamma(d_\phi)\right\rangle \nonumber\\
&=\left\langle(\phi_*\nabla v)(\gamma(d_\phi)),\dot\gamma(d_\phi)\right\rangle-\left\langle\nabla v(x),\dot\gamma(0)\right\rangle\nonumber\\
&=\left\langle\nabla v(\gamma(d_\phi)),\dot\gamma(d_\phi)\right\rangle-\left\langle\nabla v(x),\dot\gamma(0)\right\rangle, \label{graddphi}
\end{align}
where in the last equality we use the fact that $\phi$ is a measure preserving isometry.
Plugging~(\ref{ric}) and~(\ref{graddphi}) into (\ref{laplacedphi2}), we get for all $x\neq\phi(x)$
\[\Delta_md_\phi(x)\geq -\int_0^{d_\phi(x)}\operatorname{Ric}_{\infty,m}(\dot\gamma)\,{\rm d}t-\epsilon G(x)d_\phi(x)\geq -d_\phi(x)\rho_\phi(x)-\epsilon G(x)d_\phi(x).\]
From this pointwise inequality, with the same argument as the proof at the very end of \cite{DSW}, we have in the sense of distribution $\Delta_md_\phi\geq -d_\phi\rho_\phi-\epsilon Gd_\phi$.
\end{proof}

\begin{proof}[Proof of Theorem \ref{estimate2}]Since our Lemma~\ref{lap2} has exactly the same form as \cite[Proposition~4.1]{KK} with the only difference in the Laplacian, changing the volume element in every integral and $L^p$ norm in the proof of Theorem~0.3 in~\cite{KK} to the measure~$m$, we get the conclusion.

For tha sake of completeness, we write down the proof here. Similar to Theorem~\ref{estimate}, it suffices to prove that under the conditions of Theorem~\ref{estimate2} there exists $\delta=\delta(n,i_0,\Lambda_1,\Lambda_2,E,D,w,A,B)\allowbreak >0$, such that for any measure preserving isometry~$\phi$ with $\delta_\phi\leq\delta$, we have $\phi={\rm id}$. Then the remaining part of the proof is the same as the proof of Theorem~\ref{estimate}.

We assume $\delta_\phi\le\delta\le\delta_1$ so that Lemma \ref{lap2} is valid. Multiplying $d_{\phi}$ to (\ref{laplacemdphi2}) and integrating over $M$ with respect to the measure $m$, we have
\begin{gather}
0\geq -\int_{M} (\Delta_m d_{\phi}+\rho_\phi d_{\phi}+\epsilon G d_{\phi} ) d_{\phi}\, {\rm d}m \nonumber\\
\hphantom{0}{} \geq \|D d_{\phi} \|_{2}^{2}-\int_{M} [(\rho_\phi+w)_{+}+\epsilon G ] d_{\phi}^{2}\,{\rm d} m+w\ \|d_{\phi} \|_{2}^{2} \nonumber\\
\hphantom{0}{} \geq \|D d_{\phi} \|_{2}^{2}- ( \|(\rho_\phi+w)_{+} \|_{n / 2}+\epsilon\|G\|_{n / 2} ) \|d_{\phi} \|_{2 n /(n-2)}^{2}+w \|d_{\phi} \|_{2}^{2} \nonumber\\
\hphantom{0}{} \geq \|D d_{\phi} \|_{2}^{2}-\big(\Lambda_{2} V^{2 / n} \delta_{\phi}+ \|(\theta^*+w)_{+} \|_{n / 2}+\epsilon\|G\|_{n / 2}\big) \|d_{\phi} \|_{2 n /(n-2)}^{2}+w \|d_{\phi} \|_{2}^{2} \label{rhophi2}\\
\hphantom{0}{} \geq \big[1-A\big(\Lambda_{2} V^{2 / n} \delta_{\phi}+ \|(\theta^*+w)_{+} \|_{n / 2}+\epsilon\|G\|_{n / 2}\big)\big]\left\|D d_{\phi}\right\|_{2}^{2}\nonumber \\ 
\hphantom{0\geq} {} +\big[w-B\big(\Lambda_{2} V^{2 / n} \delta_{\phi}+ \|(\theta^*+w)_{+} \|_{n / 2}+\epsilon\|G\|_{n / 2}\big)\big] \|d_{\phi} \|_{2}^{2}, \label{sobolev3}
\end{gather}
where in (\ref{rhophi2}) we have used (\ref{rhophi}) and in 
 (\ref{sobolev3}) we have used the Sobolev inequality (\ref{sobolev}). If we take
\begin{gather*}
\delta_{\phi}\leq \delta= \min \left\{\delta_1,\frac{1}{4A \Lambda_{2} V^{2 / n}}, \frac{w}{4B\Lambda_{2} V^{2 / n}}\right\}, \qquad \epsilon=\frac{C\left(n / 2, n, \Lambda_{1}, E,D, i_0\right)}{4} \min \left\{\frac{1}{A}, \frac{w}{B}\right\}
\end{gather*}
then we have
\[
0\geq \left(\frac{1}{2}-A\left\|(\theta^*+w)_{+}\right\|_{n / 2}\right)\left\|D d_{\phi}\right\|_{2}^{2}+\left(\frac{w}{2}-B\left\|(\theta^*+w)_{+}\right\|_{n / 2}\right)\left\|d_{\phi}\right\|_{2}^{2}.
\]
From the assumption of $w,$ the coefficients of $\|D d_{\phi}\|_{2}^{2}$, $\|d_{\phi}\|_{2}^{2}$ are positive, so we have $d_{\phi} \equiv 0$, i.e., $\phi={\rm id}$.
\end{proof}

\subsection*{Acknowledgements}
The results in this article are mainly part of the author's undergraduate thesis at Tsinghua University. The author would like to express his sincere gratitude to Professor Jinxin Xue who brought him into this field and gave him expert advice. He would also like to thank Professors Yann Brenier and Francois Bolley for their email of discussion and Professor Tapio Rajala for telling him the articles~\cite{GS,So} on the measure preserving isometry groups of~$\mathsf{RCD}$ spaces. Finally, he would like to thank the anonymous referees for their useful comments which leads to Theorem~\ref{estimate2}.

\pdfbookmark[1]{References}{ref}
\LastPageEnding

\end{document}